\documentclass[12 pt,oneside,reqno,a4paper]{amsart} 
\usepackage{amsfonts,amssymb,amscd,amsmath,enumerate,verbatim,calc} 
\usepackage{float}
\usepackage{amsthm}
\newtheorem{theorem}{Theorem}[section]
\newtheorem{definition}{Definition}[section]

\newtheorem{lemma}[theorem]{Lemma}

\newtheorem{proposition}[theorem]{Proposition}
\newtheorem{remark}[theorem]{Remark}
\usepackage{mathtools}

\numberwithin{equation}{section}

\usepackage{amsfonts}

\textwidth = 15.5 cm 
\textheight = 26 cm 
\topmargin = -0.7cm 
\oddsidemargin = 0.8 cm 
\evensidemargin = 1 cm 
\pagestyle{plain} 
\begin{document}
	
		\title{ Multiplicative Lie algebra structure on nilpotent group of class $2$}
	\author{Deepak Pal$^{1}$, Amit Kumar$^{2}$ and Sumit Kumar Upadhyay$^{3}$ \\
		{Department of Applied Sciences,\\ Indian Institute of Information Technology Allahabad\\Prayagraj, U. P., India} }	

	\thanks{$^1$deepakpal5797@gmail.com, $^2$amitiiit007@gmail.com, $^3$upadhyaysumit365@gmail.com}
	\thanks {2020 Mathematics Subject classification:  20F14, 20F18, 20K35}
\keywords{ Nilpotent group, Metacyclic group, Multiplicative Lie algebra}

	\begin{abstract}  
		This paper explores the properties of multiplicative Lie algebra structures on a nilpotent group of class $2$. We also present a method for determining a multiplicative Lie algebra structure on a group that serves as an extension of one Lie ring by another  Lie ring such as a metacyclic group and a nilpotent group of class $2$.	
		
	\end{abstract}
	\maketitle 
	
	\section{Introduction}
A group $G$ is nilpotent of class $2$ if $[G, G] \subseteq Z(G)$, where $Z(G)$ and $[G, G]$ denote the center and commutator subgroup of $G$, respectively. One of the most famous examples of a nilpotent group of class $2$ is the Heisenberg group, which is important in both quantum mechanics and number theory. Nilpotent groups of class 2 form an important subclass of finite $p-$groups. Understanding the structure and properties of these groups helps in the broader study of finite groups and their classification.

In 1993, Ellis \cite{GJ} proposed a conjecture suggesting that every universal commutator identity can be derived from the five well-known universal commutator identities. He offered a partial confirmation of this conjecture by employing homological techniques and introducing a new algebraic structure termed ``Multiplicative Lie algebra". A multiplicative Lie algebra is a triple $ (G,\cdot,\star) $,  where $ (G,\cdot) $ is a group  and $\star$ is a binary operation on $G$ such that for all  $x, y, z  \in  G$, the following identities hold:
\begin{enumerate}
	\item $ x\star x=1 $, 
	\item $ x\star(yz)=(x\star y){^y(x\star z)} $, 
	\item $ (x y)\star z= {^x(y\star z)} (x\star z) $,
	\item $ ((x\star y)\star{^yz})((y\star z)\star{^zx})((z\star x)\star {^xy})=1 $,
	\item $ ^z(x\star y)=({^zx}\star {^zy})$, 
\end{enumerate}
where $^zx$ denotes the element $zxz^{-1}$. In any group $G$, there always exists a trivial multiplicative Lie algebra structure, given by $x\star y =1$ for all $x,y \in G$. However, if $G$ is a non-abelian group, there are at least two distinct multiplicative Lie algebra structures on $G$ : the trivial one, and another (called improper multiplicative Lie algebra structure) defined by $x\star y =[x,y]$ for all $x,y \in G$. Therefore, it becomes an interesting problem to classify all possible multiplicative Lie algebra structures on a given group $G$.
The work accomplished in this direction is detailed in \cite{AD,RLS,MS,GLW}. Expanding on the concept of multiplicative Lie algebra, Point and Wantiez \cite{FP} further developed the notions of solvability and nilpotency. In relation  to improper multiplicative Lie algebras, the authors \cite{ML} introduced the concepts of Lie commutator and multiplicative Lie center, leading to a new understanding of solvable and nilpotent multiplicative Lie algebras.

In this paper, we conclusively show that for nilpotent groups of class $2$, the concept of nilpotency for multiplicative Lie algebras as introduced in \cite{FP} directly implies the Lie nilpotency described in \cite{ML}. Additionally, we give a new multiplicative Lie algebra structure on a group by combining two existing multiplicative Lie algebra structures on that group under certain conditions. We examine essential properties of multiplicative Lie algebra structures within nilpotent groups of class $2$. Moreover, we present a method to compute a multiplicative Lie algebra structure on a given group $G$, which is an extension of a Lie ring by a Lie ring, specifically for nilpotent groups of class $2$, metacyclic groups, and two-generator nilpotent groups of class $2$ with order $p^n$, where $p$ is a prime.

	\section{Preliminaries} 
We start the section with definitions and results for multiplicative Lie algebra, which will be used throughout the article.

	\begin{proposition}\cite{GJ} \label{MLA}
		Let $(G,\cdot,\star)$ be a  multiplicative Lie algebra. Then the following identities hold:
		\begin{enumerate}
			\item $ 1\star x = x\star 1 = 1$
			
			\item $ (x\star y)(y\star x) = 1 $
			
			\item $ ^{(x\star y)}(u\star v) = {^{[x,y]}(u\star v)} $  
			
			\item $ [(x\star y),z] = [x,y]\star z$
			
			\item $ x^{-1}\star y = {^{x^{-1}}(x\star y)^{-1}}$ and $ x\star y^{-1} = {^{y^{-1}}(x\star y)^{-1}}$
			
		\end{enumerate}
		for all $x,y, z, u,v \in G.$
	\end{proposition}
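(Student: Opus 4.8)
The plan is to verify the five identities one at a time, reducing each to the defining axioms (1)--(5) of a multiplicative Lie algebra and, where convenient, to items of this proposition already established.

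I would first dispose of (1) and (5) by substitution. Putting $y=z=1$ in axiom (2) gives $x\star 1=(x\star 1)\,{}^1(x\star 1)=(x\star 1)^2$, hence $x\star 1=1$; dually, putting $x=y=1$ in axiom (3) gives $1\star z=(1\star z)^2$, hence $1\star z=1$. For (5), from $1=1\star y=(x\,x^{-1})\star y={}^x(x^{-1}\star y)\,(x\star y)$ (axiom (3)) I solve for $x^{-1}\star y={}^{x^{-1}}\!\big((x\star y)^{-1}\big)$, and from $1=x\star 1=x\star(y\,y^{-1})=(x\star y)\,{}^y(x\star y^{-1})$ (axiom (2)) I solve for $x\star y^{-1}={}^{y^{-1}}\!\big((x\star y)^{-1}\big)$.

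Next I would prove (2) by expanding $1=(xy)\star(xy)$ (axiom (1)): axiom (3) gives $(xy)\star(xy)={}^x\big(y\star(xy)\big)\big(x\star(xy)\big)$, and axiom (2) together with $x\star x=y\star y=1$ collapses the inner factors to $y\star x$ and ${}^x(x\star y)$, so that $(xy)\star(xy)={}^x\big((y\star x)(x\star y)\big)$. Since this equals $1$ and conjugation is injective, $(y\star x)(x\star y)=1$, which is exactly (2).

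The substantive part is (3) and (4), where the remaining two axioms come into play. By axiom (5), conjugation by any $g\in G$ is compatible with $\star$, i.e.\ ${}^g(u\star v)={}^gu\star{}^gv$; applying this with $g=x\star y$ and with $g=[x,y]$ rewrites both sides of (3) purely in terms of $\star$ and reduces (3) to an identity among $\star$-values. I expect the Jacobi-type fourth axiom to be precisely the ingredient needed to close this reduction, together with the antisymmetry rule (2) and the inverse rules (5); keeping track of the nested conjugations ${}^gu=[g,u]\,u$ is the main obstacle. Granting (3), I would prove (4) by expanding $[x,y]\star z$: writing $[x,y]=x\cdot{}^y(x^{-1})$ and applying axioms (3), (5), (2) and rule (5) repeatedly leads to
\[
[x,y]\star z={}^{[x,y]}\!\Big({}^z\big((x\star y)^{-1}\big)\,(x\star z)^{-1}\,(x\star y)\Big)(x\star z).
\]
The argument of the outer conjugation is a product of $\star$-values, so (3) permits replacing ${}^{[x,y]}$ by ${}^{(x\star y)}$, after which the right-hand side telescopes, by ordinary commutator calculus, to $(x\star y)\,z\,(x\star y)^{-1}z^{-1}=[(x\star y),z]$.
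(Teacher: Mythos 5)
The paper gives no proof of this proposition (it is quoted from Ellis \cite{GJ}), so your argument has to stand on its own. Items (1), (2) and (5) are fine: the substitutions $y=z=1$ in axiom (2) and $x=y=1$ in axiom (3), the expansion of $(xy)\star(xy)$, and the expansions of $(xx^{-1})\star y$ and $x\star(yy^{-1})$ are exactly the right elementary computations and they close correctly.

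The genuine gap is in items (3) and (4), which are the only parts of the proposition that require the Jacobi-type axiom (4) and hence the only parts with real content. For item (3) you write that applying axiom (5) with $g=x\star y$ and $g=[x,y]$ ``reduces (3) to an identity among $\star$-values'' and that you ``expect'' axiom (4) to close the reduction --- but the reduction only rewrites both sides as $({}^{g}u)\star({}^{g}v)$ for the two choices of $g$, which is no closer to the conclusion, and no derivation from axiom (4) is actually exhibited. For item (4) you assert, without computation, that repeated use of axioms (2), (3), (5) and identity (5) leads to the intermediate formula $[x,y]\star z={}^{[x,y]}\bigl({}^{z}((x\star y)^{-1})\,(x\star z)^{-1}\,(x\star y)\bigr)(x\star z)$; the final telescoping from that formula is correct (and the replacement of ${}^{[x,y]}$ by ${}^{(x\star y)}$ is legitimate once (3) is known, since each factor is a single $\star$-value by axiom (5) and item (2)), but the derivation of the formula itself is precisely where the work lies, and it is not done. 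Since your proof of (4) also presupposes (3), neither of the two substantive identities is actually established. You would need to carry out the manipulation of axiom (4) explicitly --- for instance, first extracting consequences such as $(z\star x)\star x=1$ from the specialization $y=1$ and then building up to $[x,y]\star z=[(x\star y),z]$ --- rather than leaving it as an expectation.
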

	
		\begin{definition}\cite{FP} Let $G$ be a multiplicative Lie algebra. Then
		
		\begin{enumerate}
		\item We call $G$ a Lie ring if $G$ is an abelian group.
			\item $G$ is said to be solvable if $\Gamma^{(n)}(G) = 1$
			for some $n$, where $\Gamma^{(0)}(G) = G,$ and $\Gamma^{(k+1)}(G) = \Gamma^{(k)}(G) \star \Gamma^{(k)}(G)$ for all $k\in \mathbb{N}\cup \{0\}$. We say that $G$ is solvable of class $n$ if $\Gamma^{(n)}(G) = 1$ and $\Gamma^{(n-1)}(G) \neq 1.$ 
			
			\item $G$ is said to be nilpotent if $\Gamma_{(n)}(G) = {1}$
			for some $n$, where $\Gamma_{(1)}(G) = G$, and $\Gamma_{(k+1)}(G) = G \star \Gamma_{(k)}(G)$ for all $k\in \mathbb{N}$. The smallest $n$ such that  $\Gamma_{(n+1)}(G) = {1}$ is called the nilpotency class of $G.$  
			
		\end{enumerate}
	\end{definition}
	
	\begin{definition} \cite{ML} Let $G$ be a multiplicative Lie algebra. The ideal $^L[G, G]$ generated by the set $\{(a\star b)^{-1}[a, b]\mid a, b \in G\}$ of $G$ is called  the Lie commutator of $G.$ The element $(a\star b)^{-1}[a, b]$ is denoted by $^L[a, b]$.
	\end{definition}

	\begin{proposition}\cite{ML} \label{Lie}
		Let $G$ be a multiplicative Lie algebra.  Then the following identities hold:
	
	\begin{enumerate}
		\item  $^L[a, a] = 1$ 
		
		\item  $^L[a, b] \ ^L[b, a] = 1$
		
		\item $^L[ab, c] = {^L[a, c]} \ ^{(^c{a})} {(^L[b, c])}$ 
		
		\item $ ^L[a, bc] = {^{b}{(^L[a, c])}} \  {^{[^{b}c, ^{b}a]}({^L[a, b]})}$ 
		
		\item $ ^a{(^L[b, c])} = {^L[^ab, ^ac]}$ 
		
		\item $^L[a^{-1}, b] = {^{a^{-1}}{(^L[b, a])}} \ \text{and} \ ^L[a, b^{-1}] = {^{b^{-1}}{(^L[b, a])}}$ 
		
		\item $^{^L[a, b]}{(x \star y)} = {(x \star y)}. $ In particular, $[^L[a, b], {(x \star y)}] = 1$
	\end{enumerate}
	for all $a, b, c, x, y \in G.$
\end{proposition}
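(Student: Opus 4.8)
The plan is to verify the seven identities one at a time, in each case expanding the definition ${}^{L}[a,b]=(a\star b)^{-1}[a,b]$ and combining the three classical group-commutator identities
\[
[a,a]=1,\qquad [ab,c]={}^{a}[b,c]\,[a,c],\qquad [a,bc]=[a,b]\,{}^{b}[a,c]
\]
with the axioms (1)--(5) of a multiplicative Lie algebra and the consequences gathered in Proposition \ref{MLA}. Identities (1) and (5) are essentially immediate: $(a\star a)^{-1}[a,a]=1$ settles (1), and for (5) one distributes the conjugation, ${}^{a}\!\big((b\star c)^{-1}[b,c]\big)=\big({}^{a}(b\star c)\big)^{-1}\,{}^{a}[b,c]$, then rewrites ${}^{a}(b\star c)={}^{a}b\star{}^{a}c$ by axiom (5) and ${}^{a}[b,c]=[{}^{a}b,{}^{a}c]$, and recognizes the result as ${}^{L}[{}^{a}b,{}^{a}c]$.

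For (2), write ${}^{L}[a,b]\,{}^{L}[b,a]=(a\star b)^{-1}[a,b](b\star a)^{-1}[b,a]$; using $(b\star a)^{-1}=a\star b$ (Proposition \ref{MLA}(2)) and $[b,a]=[a,b]^{-1}$, this collapses to the assertion that $a\star b$ commutes with $[a,b]$, which holds since $[a\star b,[a,b]]=[a,b]\star[a,b]=1$ by Proposition \ref{MLA}(4) and axiom (1). Identity (7) is of the same type: since ${}^{{}^{L}[a,b]}(x\star y)={}^{(a\star b)^{-1}}\!\big({}^{[a,b]}(x\star y)\big)$ and ${}^{[a,b]}(x\star y)={}^{(a\star b)}(x\star y)$ by Proposition \ref{MLA}(3), the two conjugations cancel and leave $x\star y$; the assertion $[{}^{L}[a,b],x\star y]=1$ then follows formally. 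Identity (6) is obtained by assembling Proposition \ref{MLA}(5) (which gives $a^{-1}\star b={}^{a^{-1}}(a\star b)^{-1}$, hence $(a^{-1}\star b)^{-1}={}^{a^{-1}}(a\star b)$), Proposition \ref{MLA}(2), and the elementary relation $[a^{-1},b]={}^{a^{-1}}[b,a]$ (and symmetrically for $[a,b^{-1}]$).

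The substantive computations are (3) and (4). For (3), start from ${}^{L}[ab,c]=(ab\star c)^{-1}[ab,c]$, substitute $ab\star c={}^{a}(b\star c)\,(a\star c)$ from axiom (3) and $[ab,c]={}^{a}[b,c]\,[a,c]$, and merge the middle terms to obtain ${}^{L}[ab,c]=(a\star c)^{-1}\,{}^{a}\!\big({}^{L}[b,c]\big)\,[a,c]$. Now carry the final $[a,c]$ leftward past the middle factor using $[a,c]^{-1}a={}^{c}a$, so that ${}^{a}\!\big({}^{L}[b,c]\big)\,[a,c]=[a,c]\,{}^{{}^{c}a}\!\big({}^{L}[b,c]\big)$ and the product becomes $(a\star c)^{-1}[a,c]\cdot{}^{{}^{c}a}\!\big({}^{L}[b,c]\big)={}^{L}[a,c]\,{}^{{}^{c}a}\!\big({}^{L}[b,c]\big)$, as required. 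For (4), the parallel expansion via axiom (2) and $[a,bc]=[a,b]\,{}^{b}[a,c]$ gives ${}^{L}[a,bc]={}^{b}\!\big((a\star c)^{-1}\big)\,{}^{L}[a,b]\,{}^{b}[a,c]$; the governing observation is $[{}^{b}c,{}^{b}a]=\big({}^{b}[a,c]\big)^{-1}$, so that moving ${}^{b}[a,c]$ to the left of ${}^{L}[a,b]$ rewrites ${}^{L}[a,b]\,{}^{b}[a,c]$ as ${}^{b}[a,c]\cdot{}^{[{}^{b}c,{}^{b}a]}\!\big({}^{L}[a,b]\big)$, and absorbing the prefix then yields ${}^{b}\!\big((a\star c)^{-1}[a,c]\big)\cdot{}^{[{}^{b}c,{}^{b}a]}\!\big({}^{L}[a,b]\big)={}^{b}\!\big({}^{L}[a,c]\big)\,{}^{[{}^{b}c,{}^{b}a]}\!\big({}^{L}[a,b]\big)$. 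I expect the only genuine obstacle to be the conjugation bookkeeping in (3) and (4) --- keeping straight which element conjugates which, and invoking $(b\star a)^{-1}=a\star b$ at the right moments --- since nothing beyond the stated axioms and Proposition \ref{MLA} is needed.
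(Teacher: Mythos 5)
Your proposal is correct: each of the seven identities is verified by expanding ${}^{L}[a,b]=(a\star b)^{-1}[a,b]$ and combining the multiplicative Lie algebra axioms with Proposition \ref{MLA}, and the key conjugation manipulations in (3) and (4) (using $[a,c]^{-1}a={}^{c}a$ and $\bigl({}^{b}[a,c]\bigr)^{-1}=[{}^{b}c,{}^{b}a]$) are carried out accurately. The paper itself states this proposition without proof, citing \cite{ML}; your argument is the standard verification given there.
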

	
	\begin{definition}\cite{ML} Let $G$ be a multiplicative Lie algebra. Then
	
	\begin{enumerate}
	\item $G$ is said to be Lie solvable if $G^{(n)} = 1$
for some $n$, where $G^{(0)} = G,$ and $G^{(k+1)} = {^L[G^{(k)}, G^{(k)}]}$ for all $k\in \mathbb{N}\cup \{0\}$. The smallest $n$ such that  $G^{(n)} = 1$ is called the derived length of $G.$

\item $G$ is said to be Lie nilpotent if $L_n(G) = {1}$
for some $n$, where $G =L_0(G)$, and $L_{k+1}(G) =  {^L[G, L_k(G)]}$ for all $k\in \mathbb{N}\cup \{0\}$. We say that $G$ is nilpotent of class $n$ if $L_n(G) = {1}$ but $L_{n-1}(G)  \neq 1.$ 
	
	\end{enumerate}
\end{definition}

\begin{proposition} \cite{ML}
	The set $MZ(G) = \{a \in G : \ ^L[a, b] = 1, \forall \ b \in G \}$ is an ideal
	of $G,$ called as the multiplicative Lie center of $G.$ 
\end{proposition}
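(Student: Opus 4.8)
The plan is to verify directly the three conditions defining an ideal of a multiplicative Lie algebra: recall that a subset $N\subseteq G$ is an ideal of $(G,\cdot,\star)$ if $N$ is a normal subgroup of $(G,\cdot)$ and $G\star N\subseteq N$. So I must show that $MZ(G)$ is a subgroup of $(G,\cdot)$, that it is normal in $G$, and that it is absorbing under $\star$. Each of these reduces to a short manipulation with the identities for $^L[\cdot,\cdot]$ collected in Proposition \ref{Lie}, together with Proposition \ref{MLA}.

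First I would note $1\in MZ(G)$, since $^L[1,b]=(1\star b)^{-1}[1,b]=1$ by Proposition \ref{MLA}(1). For closure under products, given $a,a'\in MZ(G)$ I apply Proposition \ref{Lie}(3): $^L[aa',c]={^L[a,c]}\cdot{^{(^c{a})}}({^L[a',c]})$, and both factors on the right are $1$, so $^L[aa',c]=1$ for all $c$. For inverses, Proposition \ref{Lie}(6) gives $^L[a^{-1},b]={^{a^{-1}}}({^L[b,a]})$, and $^L[b,a]=({^L[a,b]})^{-1}=1$ by Proposition \ref{Lie}(2); hence $a^{-1}\in MZ(G)$. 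So $MZ(G)$ is a subgroup. For normality, take $a\in MZ(G)$ and $g\in G$ and use Proposition \ref{Lie}(5): $^L[{^g a},{^g c}]={^g}({^L[a,c]})=1$ for every $c\in G$; since conjugation by $g$ permutes $G$, this exactly says $^L[{^g a},b]=1$ for all $b$, i.e. $^g a\in MZ(G)$.

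The $\star$-absorption is the step that needs an actual idea rather than a substitution, and it is where I expect the only real obstacle. The point is that $a\in MZ(G)$ forces $a\star g=[a,g]$ for every $g$ — this is just unwinding $^L[a,g]=1$ — and then Proposition \ref{MLA}(2) yields $g\star a=(a\star g)^{-1}=[a,g]^{-1}=[g,a]$. Now write $h:=g\star a=[g,a]$. Applying Proposition \ref{MLA}(4) with $x=g$, $y=a$, $z=b$ gives $[(g\star a),b]=[g,a]\star b$, which reads precisely $[h,b]=h\star b$; therefore $^L[h,b]=(h\star b)^{-1}[h,b]=1$ for all $b$, so $g\star a\in MZ(G)$. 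Since $MZ(G)$ is a subgroup, $a\star g=(g\star a)^{-1}$ also lies in it, so $MZ(G)\star G\subseteq MZ(G)$ as well. Together with the previous paragraph this shows $MZ(G)$ is an ideal of $G$. The crux to watch is that one must first identify $g\star a$ with the genuine group commutator $[g,a]$ before the identity $[(x\star y),z]=[x,y]\star z$ can be brought to bear, after which the coincidence $[h,b]=h\star b$ collapses $^L[h,b]$ to $1$.
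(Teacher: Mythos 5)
Your argument is correct and complete: the subgroup and normality parts follow exactly as you say from Proposition \ref{Lie}(2),(3),(5),(6), and the $\star$-absorption step — first deducing $g\star a=[g,a]$ from $^L[a,g]=1$ and Proposition \ref{MLA}(2), then feeding $h=g\star a=[g,a]$ into the identity $[(x\star y),z]=[x,y]\star z$ to get $^L[h,b]=1$ — is exactly the right idea. The paper itself states this proposition as a citation from \cite{ML} and gives no proof, so there is nothing to compare against here; your write-up is the standard self-contained verification one would expect.
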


\section{Some properties of nilpotent groups of class $2$}
In this section, we study some properties of multiplicative Lie algebra structures on nilpotent group of class $2$. 

\begin{proposition} \label{C1}
	Let $G$ be a nilpotent group of class $2$ with multiplicative Lie algebra structure $\star.$ Then 
	\begin{enumerate}
		\item There is only trivial multiplicative Lie algebra structure on the commutator subgroup $[G, G]$ . 
		\item  The subgroup $G \star G$ of $G$, generated by the set $\{ x \star y \mid x, y\in G \}$,  is abelian.
		
		\item  The subgroup  $^L[G, G]$ of $G$, generated by the set $\{(x\star y)^{-1}[x, y]\mid x, y \in G\}$,  is abelian.
		
		\item  $ MZ(G) \star MZ(G) \subseteq Z(G).$
		
		\item $(G\star G)\star [G, G] = 1.$    
	\end{enumerate}
\end{proposition}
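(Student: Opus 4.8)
The plan is to prove part~(2) first, since parts~(3) and~(5) feed off it, and to exploit throughout the fact that each subgroup in the statement is generated by ``elementary'' elements: $G\star G$ by the $x\star y$, $[G,G]$ by the commutators $[x,y]$, and $^L[G,G]$ by the $^L[x,y]=(x\star y)^{-1}[x,y]$. By the bilinearity identities~(2) and~(3) in the definition of a multiplicative Lie algebra, together with Proposition~\ref{MLA}(5) to handle inverses of generators, every assertion below reduces to a statement about these generators; and whenever this reduction produces a conjugation twist $^w(\,\cdot\,)$ with $w\in[G,G]$, that twist is harmless because $[G,G]\subseteq Z(G)$ in a class-$2$ group.

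For~(2), I would invoke Proposition~\ref{MLA}(3): for all $x,y,u,v\in G$ one has $^{(x\star y)}(u\star v)={}^{[x,y]}(u\star v)$. Since $[x,y]\in Z(G)$, the right-hand side is just $u\star v$, so $x\star y$ centralizes $u\star v$; as such elements generate $G\star G$, the subgroup $G\star G$ is abelian. Part~(5) then follows quickly: using Proposition~\ref{MLA}(2) and~(4), a generator of $(G\star G)\star[G,G]$ satisfies $(x\star y)\star[a,b]=\bigl([a,b]\star(x\star y)\bigr)^{-1}=\bigl[(a\star b),(x\star y)\bigr]^{-1}$, which is trivial because $a\star b$ and $x\star y$ lie in the abelian group $G\star G$; bilinearity upgrades this to $(G\star G)\star[G,G]=1$.

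For~(1), Proposition~\ref{MLA}(4) gives $[a,b]\star v=[(a\star b),v]$, and this vanishes when $v\in[G,G]\subseteq Z(G)$; expanding an arbitrary element of $[G,G]$ as a product of commutators (the conjugating factors that appear again lie in $[G,G]\subseteq Z(G)$, so they drop out) yields $u\star v=1$ for all $u,v\in[G,G]$, i.e.\ the induced structure on $[G,G]$ is trivial. For~(3), writing $^L[a,b]=(a\star b)^{-1}[a,b]$ and using that $[a,b],[c,d]$ are central together with the commutativity of $G\star G$ from~(2), one checks by a short rearrangement that $^L[a,b]$ and $^L[c,d]$ commute (or simply cite Proposition~\ref{Lie}(7)), so $^L[G,G]$ is abelian. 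For~(4), if $a\in MZ(G)$ then $^L[a,b]=1$, i.e.\ $a\star b=[a,b]$, for every $b\in G$; taking $b=a'\in MZ(G)$ gives $a\star a'=[a,a']\in[G,G]\subseteq Z(G)$, so all generators of $MZ(G)\star MZ(G)$ lie in $Z(G)$, whence $MZ(G)\star MZ(G)\subseteq Z(G)$.

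The only genuinely substantive step is the one behind~(2) — recognising, via Proposition~\ref{MLA}(3), that conjugation by $x\star y$ equals conjugation by the central element $[x,y]$ and is therefore trivial. Everything else is bookkeeping; the point that needs a little care is to confirm, in parts~(1), (3) and~(5), that the conjugating elements thrown up by the distributivity laws really do lie in $[G,G]$ (respectively $G\star G$), so that the class-$2$ hypothesis can be applied to kill them.
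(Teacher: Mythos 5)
Your proposal is correct and follows essentially the same route as the paper: part (2) via Proposition \ref{MLA}(3) and the centrality of $[x,y]$, part (1) via $[x,y]\star v=[(x\star y),v]=1$, part (5) via Proposition \ref{MLA}(2) and (4) together with the abelianness from (2), and in each case reduction to generators. The only local deviations are in (4), where you argue directly from $a\star b=[a,b]$ for $a\in MZ(G)$ instead of the paper's computation with $^L[[x,y],z]=1$ (your version is slightly cleaner, as it avoids the implicit fact that $[x,y]\in MZ(G)$), and in (3), where citing Proposition \ref{Lie}(7) plus centrality of $[G,G]$ replaces the paper's explicit commutator expansion; both variants are valid.
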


\begin{proof} \textit{(1)} Let $[x, y], [u,v]\in [G,G].$ Then by Proposition \ref{MLA}, we have  $[x, y]\star [u,v] = [(x\star y), [u,v]] = 1 $ as $ [G,G]\subseteq Z(G)$. Hence, $g\star g'= 1$, for all $g, g'\in [G, G].$
	
\textit{(2)} Let $(x\star y), (u \star v )\in G\star G.$ Then  by Proposition \ref{MLA}, we have  $ ^{(x\star y)}(u\star v) = {^{[x,y]}(u\star v)}.$  This implies that $ {(x\star y)}(u\star v) = (u\star v)(x\star y)$ as $G$ is a nilpotent group of class $2.$ Hence, $G \star G $ is abelian.
	
\textit{(3)} To prove $^L[G, G]$ is abelian, it suffices to show that $[^L[x, y], ^L[u, v] ] = 1$ for all  $^L[x, y], ^L[u, v] \in {^L[G, G]}.$ Now, we have
\begin{align*}
		[^L[x, y], ^L[u, v] ] &= [(x\star y)^{-1}[x, y], ^L[u, v] ] \\
		&=  {^{(x\star y)^{-1}}[[x, y], ^L[u, v] ]} [(x\star y)^{-1}, ^L[u, v] ] \\&=  {^{(x\star y)^{-1}}[[x, y], ^L[u, v] ]} [(x\star y)^{-1}, (u\star v)^{-1} ] ~{^{(u\star v)^{-1}}[(x\star y)^{-1}, [u, v] ]} \\
		&= 1~ (\text{since ~} [G, G] \subseteq Z(G) ~\text{and ~}  G \star G ~\text{is~ abelian}).
\end{align*}
	Hence, we are done.
	
\textit{(4)}  To prove  $ MZ(G) \star MZ(G)  \subseteq  Z(G),$ it suffices to show that $[z, x\star y] = 1$ for every $x, y \in MZ(G) \ \text{and} \ z\in G.$	Now, by the definition of multiplicative Lie center and Proposition \ref{MLA}, we have
\begin{align*}
	^L[[x, y], z] &= 1  \\& \hspace{-2.7cm} \implies ([x, y]\star z)^{-1} [[x, y], z] = 1\\& \hspace{-2.7cm} \implies [z, x\star y] = 1
\end{align*}
 Hence,  $ MZ(G) \star MZ(G)  \subseteq  Z(G).$
	
\textit{(5)} Let $(x\star y)\star [u, v]\in (G\star G)\star [G, G]. $ Then by Proposition \ref{MLA}, we have $(x\star y)\star [u, v] = [(x\star y), (u\star v)] = 1.$ Hence $(G\star G)\star [G, G] = 1 .$  
\end{proof}

\begin{lemma}\label{LZG}
	Let $G$ be a nilpotent group of class $2$ with multiplicative Lie algebra structure $\star$ such that the induced multiplicative Lie algebra structure on $\frac{G}{[G, G]}$ is trivial. Then $[G, G] \subseteq LZ(G).$ 
\end{lemma}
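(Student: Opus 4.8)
The plan is to show that every basic commutator $[u,v]$ of $G$ lies in $LZ(G)$, and then to conclude by the fact that $LZ(G)$ is an ideal of $G$ (in particular a subgroup) and that $[G,G]$ is, by definition, the subgroup generated by all such commutators. Throughout I will use the characterisation that $a\in LZ(G)$ if and only if $^L[a,b]=(a\star b)^{-1}[a,b]=1$ for every $b\in G$, i.e. $a\star b=[a,b]$ for all $b$.

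First I would unpack the hypothesis. Saying that the induced multiplicative Lie algebra structure on $\frac{G}{[G,G]}$ is trivial means precisely that $\overline{x\star y}=\bar 1$ in $\frac{G}{[G,G]}$ for all $x,y\in G$, that is, $x\star y\in[G,G]$ for all $x,y$; equivalently $G\star G\subseteq[G,G]$. Since $G$ is nilpotent of class $2$ we have $[G,G]\subseteq Z(G)$, and hence $G\star G\subseteq Z(G)$. This is the one place where both the class‑$2$ assumption and the triviality assumption are really used.

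Next, fix arbitrary $u,v,b\in G$ and evaluate $^L[[u,v],b]=([u,v]\star b)^{-1}\,[[u,v],b]$. For the ordinary commutator factor, $[u,v]\in[G,G]\subseteq Z(G)$ forces $[[u,v],b]=1$. For the star factor, Proposition \ref{MLA}(4) gives $[u,v]\star b=[(u\star v),b]$, and since $u\star v\in G\star G\subseteq Z(G)$ by the previous step, $[(u\star v),b]=1$. Therefore $^L[[u,v],b]=1$ for every $b\in G$, so $[u,v]\in LZ(G)$. As $LZ(G)$ is an ideal of $G$, hence a subgroup, and $[G,G]=\langle [u,v]\mid u,v\in G\rangle$, we conclude $[G,G]\subseteq LZ(G)$.

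I do not expect a serious obstacle here: the argument is a short chain of the identities in Proposition \ref{MLA} combined with the class‑$2$ hypothesis. The only points needing care are the correct reading of the hypothesis (that triviality of the induced structure on $\frac{G}{[G,G]}$ is exactly $G\star G\subseteq[G,G]$) and the legitimacy of passing from the generating commutators to all of $[G,G]$, which is immediate once one recalls that $LZ(G)$ is a subgroup.
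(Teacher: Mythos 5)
Your argument is correct and is essentially the paper's own proof: both reduce the claim, via Proposition \ref{MLA}(4), to showing $G\star G\subseteq Z(G)$, and both obtain this from the triviality of the induced structure on $\frac{G}{[G,G]}$ together with $[G,G]\subseteq Z(G)$. Your additional remarks on passing from the generating commutators to all of $[G,G]$ just make explicit a step the paper leaves implicit.
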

\begin{proof}
	Without loss of generality, assume $[x, y] \in[G, G]$ and $g \in G.$ Then by Proposition \ref{MLA}, we have $[x, y] \star g = [x \star y, g].$ Thus, to prove $[G, G] \subseteq LZ(G),$ it is enough to show that $G\star G \subseteq Z(G).$ Since $\frac{G}{[G, G]}$ has trivial  multiplicative Lie algebra structure,  $a[G, G] \star b[G, G] = (a \star b)[G, G] = [G, G]$ for all $a, b \in G$. Hence,  $(a \star b) \in [G, G] \subseteq Z(G).$ Hence $G\star G \subseteq Z(G).$   This completes the proof.   
\end{proof}

\begin{lemma}\label{Lemma}
Let $G$ be a nilpotent group of class $2$ with multiplicative Lie algebra structure $\star.$ Then
$[G, L_n(G)] \subseteq \Gamma_{(n+1)}(G),  \ \text{for all} \ n \in \mathbb{N}.$ 
\end{lemma}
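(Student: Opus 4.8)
The plan is to prove the lemma by induction on $n$, but since the naive inclusion $L_n(G)\subseteq\Gamma_{(n+1)}(G)$ fails in general (for the trivial structure $L_1(G)=[G,G]$ while $\Gamma_{(2)}(G)=1$), I will first establish the stronger claim
\[
L_n(G)\ \subseteq\ \Gamma_{(n+1)}(G)\,[G,G]\qquad\text{for all }n\in\N,\qquad(\heartsuit_n)
\]
and then derive the lemma from it. Indeed, granting $(\heartsuit_n)$, for $a\in\Gamma_{(n+1)}(G)$ and $c\in[G,G]\subseteq Z(G)$ (here the class-$2$ hypothesis enters) one has $[g,ac]=[g,a]\,{}^a[g,c]=[g,a]$, so
\[
[G,L_n(G)]\ \subseteq\ [G,\Gamma_{(n+1)}(G)\,[G,G]]\ \subseteq\ [G,\Gamma_{(n+1)}(G)]\ \subseteq\ \Gamma_{(n+1)}(G),
\]
the last inclusion holding because $\Gamma_{(n+1)}(G)$ is a normal subgroup of $G$.

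Before the induction I will record two facts and a consequence. First, each $\Gamma_{(k)}(G)$ is an ideal of $G$, so the $\Gamma_{(k)}(G)$ form a descending chain; this is standard and is readily checked from the axioms using Proposition \ref{MLA}. Second, $G\star[G,G]\subseteq[G,G]$: for a single commutator $g\star[u,v]=([u,v]\star g)^{-1}=[(u\star v),g]^{-1}\in[G,G]$ by Proposition \ref{MLA}(2),(4), and the general case follows from axiom (2) of a multiplicative Lie algebra together with the centrality of $[G,G]$. Using these, I will check that $\Gamma_{(k)}(G)\,[G,G]$ is itself an ideal of $G$ for every $k$: it is a normal subgroup, and for $w\in\Gamma_{(k)}(G)$ and $c\in[G,G]$, axiom (2) gives $g\star(wc)=(g\star w)\,{}^{w}(g\star c)=(g\star w)(g\star c)$, which lies in $\Gamma_{(k+1)}(G)\,[G,G]\subseteq\Gamma_{(k)}(G)\,[G,G]$ since $g\star c\in[G,G]$ is central. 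This is precisely what lets me pass from the defining generators of a Lie commutator to the whole ideal it generates.

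Now the induction. For $n=1$, $L_1(G)={}^L[G,G]$ is the ideal generated by the elements ${}^L[a,b]=(a\star b)^{-1}[a,b]$; since $(a\star b)^{-1}=b\star a\in G\star G=\Gamma_{(2)}(G)$ and $[a,b]\in[G,G]$, each such generator lies in the ideal $\Gamma_{(2)}(G)\,[G,G]$, hence so does $L_1(G)$. For the inductive step, assume $(\heartsuit_n)$. Then $L_{n+1}(G)={}^L[G,L_n(G)]$ is the ideal generated by the elements ${}^L[g,x]=(g\star x)^{-1}[g,x]$ with $g\in G$ and $x\in L_n(G)$. Writing $x=wc$ with $w\in\Gamma_{(n+1)}(G)$ and $c\in[G,G]\subseteq Z(G)$ (using $(\heartsuit_n)$), axiom (2) gives $g\star x=(g\star w)(g\star c)$ exactly as above, while $[g,x]\in[G,G]$ because $x\in G$. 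Since $g\star w\in G\star\Gamma_{(n+1)}(G)=\Gamma_{(n+2)}(G)$ and $g\star c,\,[g,x]\in[G,G]\subseteq Z(G)$, rearranging yields ${}^L[g,x]=(g\star w)^{-1}\bigl((g\star c)^{-1}[g,x]\bigr)\in\Gamma_{(n+2)}(G)\,[G,G]$. As the latter is an ideal, $L_{n+1}(G)\subseteq\Gamma_{(n+2)}(G)\,[G,G]$, which is $(\heartsuit_{n+1})$, and the induction closes.

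The one genuinely non-obvious point is the choice of strengthening: one cannot avoid carrying the central factor $[G,G]$ along — the trivial structure shows it is really needed — and the decisive use of the class-$2$ hypothesis is that this factor disappears the instant one commutes with $G$. The rest is bookkeeping: the standard fact that the $\Gamma_{(k)}(G)$ form a descending chain of ideals, the identity $G\star[G,G]\subseteq[G,G]$, and the verification that $\Gamma_{(k)}(G)\,[G,G]$ is again an ideal, which is what converts the generator-level computation into the claimed containment.
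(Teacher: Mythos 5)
Your proof is correct, but it takes a genuinely different route from the paper's. The paper inducts directly on the assertion $[G,L_n(G)]\subseteq\Gamma_{(n+1)}(G)$: for a generator ${}^L[x,y]$ of $L_{n+1}(G)$ with $y\in L_{n}(G)$ it computes, using the centrality of $[x,y]$ and Proposition \ref{MLA}, $[g,{}^L[x,y]]=[g,(y\star x)[x,y]]=[g,y\star x]=g\star[y,x]$, and then feeds $[y,x]\in[G,L_{n}(G)]\subseteq\Gamma_{(n+1)}(G)$ (the induction hypothesis) into $G\star\Gamma_{(n+1)}(G)=\Gamma_{(n+2)}(G)$. You instead induct on the auxiliary containment $L_n(G)\subseteq\Gamma_{(n+1)}(G)\,[G,G]$ and only at the end commute with $G$ to kill the central factor. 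Both arguments are sound. What yours buys: it records where $L_n(G)$ itself sits (the lemma only controls $[G,L_n(G)]$), it explains via the trivial-structure example why the $[G,G]$ factor cannot be dropped, and --- by checking that $\Gamma_{(k)}(G)\,[G,G]$ is an ideal --- it handles cleanly the passage from the generators ${}^L[g,x]$ to the whole ideal $L_{n+1}(G)$, a point the paper's proof leaves implicit (it verifies the containment only on generators, and one must separately check products, inverses, conjugates and $\star$-images, all of which do go through in class $2$). What the paper's version buys: it is substantially shorter, since the identity $[g,u\star v]=g\star[u,v]$ lets it bypass any decomposition of elements of $L_n(G)$.
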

\begin{proof}
We prove the lemma by induction on $n$. Let $[g, ^L[x, y]] \in [G, L_1(G)], $ where $g\in G \ \text{and} \ ^L[x, y] \in L_1(G).$ Then $[g, ^L[x, y]] = [g, (x \star y)^{-1}[x, y]] = [g, (x \star y)^{-1}] = {^g{(x \star y)^{-1}}} (x \star y) \in G \star G = \Gamma_{(2)}(G).$ Hence $[G, L_1(G)] \subseteq \Gamma_{(2)}(G). $ Thus, the result holds for $n = 1.$

Assume the result holds for $n$. Let  $[g, ^L[x, y ]] \in [G, ^L[G, L_n(G) ]] = [G, L_{n+1}(G) ]$, where $y \in L_n(G)$. Then by Proposition \ref{MLA}, we have $[g, ^L[x, y ]] =  [g,  ( x\star y)^{-1} [x ,y]] = [g,  y\star x] = g \star [y, x] .$ Now, by the induction hypothesis, we have $[x, y]^{-1} = [y, x] \in \Gamma_{(n+1)}(G),$ so $g \star [y, x] \in \Gamma_{(n+2)}(G).$ Thus $[G, L_{n+1}(G)] \subseteq \Gamma_{(n+2)}(G).$ 	Hence, the result follows.   
\end{proof}

\begin{theorem} \label{KC}
	Let $G$ be a nilpotent group of class $2$ with multiplicative Lie algebra structure $\star.$ If $G$ is a nilpotent (solvable) multiplicative Lie algebra, then $G$ is also Lie nilpotent (solvable). 
\end{theorem}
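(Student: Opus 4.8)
The plan is to compare, term by term, the lower-central $\star$-series $\{\Gamma_{(k)}(G)\}$ with the Lie lower-central series $\{L_k(G)\}$, and the derived $\star$-series $\{\Gamma^{(k)}(G)\}$ with the Lie derived series $\{G^{(k)}\}$. Concretely I would prove by induction on $k$ that $L_k(G)\subseteq\Gamma_{(k)}(G)$ for all $k\ge 1$ and that $G^{(k)}\subseteq\Gamma^{(k-1)}(G)$ for all $k\ge 1$; granting these, $\Gamma_{(n+1)}(G)=1$ forces $L_{n+1}(G)=1$, so $G$ is Lie nilpotent, and $\Gamma^{(n)}(G)=1$ forces $G^{(n+1)}=1$, so $G$ is Lie solvable. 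Before the inductions I would record the structural facts that (a) $\Gamma_{(k)}(G)$ is an ideal of $G$ for every $k\ge 2$ and $\{\Gamma_{(k)}(G)\}$ is decreasing, and (b) $\Gamma^{(k)}(G)$ is an ideal of $G$ for every $k\ge 1$ and $\{\Gamma^{(k)}(G)\}$ is decreasing. Fact (a) is routine: the decrease follows by induction from $\Gamma_{(k+1)}(G)=G\star\Gamma_{(k)}(G)$ with $\Gamma_{(k)}(G)\subseteq G$; normality is immediate from identity (5), ${}^z(x\star y)=({}^zx\star{}^zy)$; and the $\star$-closures $G\star\Gamma_{(k)}(G)=\Gamma_{(k+1)}(G)\subseteq\Gamma_{(k)}(G)$ and $\Gamma_{(k)}(G)\star G\subseteq\Gamma_{(k)}(G)$ (using $w\star g=(g\star w)^{-1}$, Proposition \ref{MLA}(2)) then follow. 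I expect fact (b) to be the real obstacle and treat it last.

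For the nilpotent assertion I would induct to prove $L_k(G)\subseteq\Gamma_{(k)}(G)$. The case $k=1$ is trivial since $\Gamma_{(1)}(G)=G$. For the step, a generator of $L_{k+1}(G)={}^L[G,L_k(G)]$ has the form ${}^L[g,h]=(g\star h)^{-1}[g,h]$ with $g\in G$ and $h\in L_k(G)\subseteq\Gamma_{(k)}(G)$ (inductive hypothesis); then $[g,h]\in[G,L_k(G)]\subseteq\Gamma_{(k+1)}(G)$ by Lemma \ref{Lemma}, while $g\star h\in G\star\Gamma_{(k)}(G)=\Gamma_{(k+1)}(G)$, so ${}^L[g,h]\in\Gamma_{(k+1)}(G)$. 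Since $\Gamma_{(k+1)}(G)$ is an ideal by (a), the ideal generated by these elements stays inside it, i.e. $L_{k+1}(G)\subseteq\Gamma_{(k+1)}(G)$; plugging in $\Gamma_{(n+1)}(G)=1$ gives $L_{n+1}(G)=1$.

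For the solvable assertion I would induct to prove $G^{(k)}\subseteq\Gamma^{(k-1)}(G)$, with Proposition \ref{C1}(3) playing the role Lemma \ref{Lemma} played above. The case $k=1$ is trivial. Observe that for every $k\ge 1$ one has $G^{(k)}\subseteq{}^L[G,G]$ (the Lie derived series is decreasing, each $G^{(k)}$ being an ideal), and ${}^L[G,G]$ is abelian by Proposition \ref{C1}(3); hence $[a,b]=1$ for all $a,b\in G^{(k)}$, so ${}^L[a,b]=(a\star b)^{-1}$. For the step, if $G^{(k)}\subseteq\Gamma^{(k-1)}(G)$ then for $a,b\in G^{(k)}$ we get $(a\star b)^{-1}\in G^{(k)}\star G^{(k)}\subseteq\Gamma^{(k-1)}(G)\star\Gamma^{(k-1)}(G)=\Gamma^{(k)}(G)$, and since $\Gamma^{(k)}(G)$ is an ideal by (b), $G^{(k+1)}={}^L[G^{(k)},G^{(k)}]\subseteq\Gamma^{(k)}(G)$; plugging in $\Gamma^{(n)}(G)=1$ gives $G^{(n+1)}=1$.

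The hard part is fact (b): showing $\Gamma^{(k)}(G)$ is an ideal, where the delicate point is $G\star\Gamma^{(k)}(G)\subseteq\Gamma^{(k)}(G)$, since a priori $G\star(G\star G)$ need not lie inside $(G\star G)\star(G\star G)$. Here I would use that in a class-$2$ group Proposition \ref{C1}(5) collapses the Jacobi identity (4): because $(G\star G)\star[G,G]=1$, replacing ${}^yz,{}^zx,{}^xy$ by $z,x,y$ changes nothing, so $((x\star y)\star z)((y\star z)\star x)((z\star x)\star y)=1$ for all $x,y,z$. Now induct on $k$ (the case $k=1$, $\Gamma^{(1)}(G)=G\star G$, being as easy as $\Gamma_{(2)}(G)$ in (a)); assuming $\Gamma^{(k-1)}(G)$ is already an ideal, take $a,b\in\Gamma^{(k-1)}(G)$ and $g\in G$: then $g\star(a\star b)=((a\star b)\star g)^{-1}$, and the collapsed Jacobi identity rewrites $(a\star b)\star g$ as $\bigl(a\star(b\star g)\bigr)\bigl(b\star(g\star a)\bigr)$, where $b\star g$ and $g\star a$ lie in $\Gamma^{(k-1)}(G)$ (ideal hypothesis), so each factor lies in $\Gamma^{(k-1)}(G)\star\Gamma^{(k-1)}(G)=\Gamma^{(k)}(G)$ and hence $g\star(a\star b)\in\Gamma^{(k)}(G)$. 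Since $G\star G$ is abelian (Proposition \ref{C1}(2)), $g\star w$ for general $w\in\Gamma^{(k)}(G)$ distributes over the defining product $w=\prod(a_i\star b_i)$, giving $G\star\Gamma^{(k)}(G)\subseteq\Gamma^{(k)}(G)$; then $\Gamma^{(k)}(G)\star G\subseteq\Gamma^{(k)}(G)$ follows via $w\star g=(g\star w)^{-1}$, and normality is again identity (5). This establishes (b), completing the proof.
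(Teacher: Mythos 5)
Your proposal is correct and, for the nilpotent case, follows the paper's strategy essentially verbatim: an induction showing $L_k(G)\subseteq\Gamma_{(k)}(G)$, with Lemma \ref{Lemma} absorbing the commutator factor of ${}^L[g,h]=(g\star h)^{-1}[g,h]$ and the definition of $\Gamma_{(k+1)}(G)$ absorbing the $\star$-factor. For the solvable case the paper writes only ``similarly, we can prove the solvable case,'' whereas you supply the actual argument — using Proposition \ref{C1}(3) to reduce ${}^L[a,b]$ to $(a\star b)^{-1}$ on the derived terms, and, more substantively, establishing $G\star\Gamma^{(k)}(G)\subseteq\Gamma^{(k)}(G)$ via the collapsed Jacobi identity in a class-$2$ group, a closure property one genuinely needs (since $G^{(k+1)}$ is defined as an ideal closure) and which the paper never records.
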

\begin{proof}
Suppose $G$ is nilpotent multiplicative Lie algebra of class $n-1.$ Then $\Gamma_{(n)}(G) = {1}.$ To prove the theorem, it is sufficient to show that $ L_n(G) \subseteq \Gamma_{(n)}(G)$ $\text{for all} \ n \in \mathbb{N}.$ We proceed with the proof of the claim by induction on $n.$ Since $L_1(G)$  is a subgroup of $G$. So, $L_1(G) \subseteq G.$ Thus, the result holds for $n = 1.$ 

Suppose the statement is true for $n-1$. Let ${^L[g, h]} \in {^L[G, L_{n-1}(G)]} = L_n(G),$ where $h \in L_{n-1}(G).$ Then $ ^L[g, h] =  (g \star h )^{-1} [g, h].$ Now, by the induction hypothesis and Lemma \ref{Lemma}, we have $ L_n(G) \subseteq \Gamma_{(n)}(G) \ \text{for all} \ n \in \mathbb{N}.$ However, by assumption, we have $\Gamma_{(n)}(G) = {1},$ so $L_n(G) = 1.$ We conclude that $G$ is Lie nilpotent.    

Similarly, we can prove the solvable case.
\end{proof}

In the next theorem, we construct a new multiplicative Lie algebra structure with the help of two given multiplicative Lie algebra structures under certain conditions. 
 \begin{theorem} \label{C5}
 Let $G$ be a group with  multiplicative Lie algebra structures $\star_1$ and  $\star_2$, satisfying the following conditions for all $x, y, z, w \in G$:
 \begin{enumerate}
 	\item $ G\star_1 G $ and $ G\star_2 G $ are abelian;
 	\item $ (x\star_1 y)(z \star_2 w)=(z \star_2 w)(x\star_1 y) $; 
 	\item  $ ((x\star_1 y){\star_2} {^yz})((x\star_2 y){\star_1} {^yz})((y\star_1 z)\star_2{^zx})((y\star_2 z)\star_1{^zx})((z\star_1 x)\star_2 {^xy})((z\star_2 x)\star_1 {^xy})=1 $.
 \end{enumerate}
 Then the binary operation $\star $ on $G,$ given by $x \star y = (x\star_1 y)(x\star_2 y) $ defines a multiplicative Lie algebra structure on $G.$  
 \end{theorem}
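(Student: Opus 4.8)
The plan is to verify directly that the operation $x\star y=(x\star_1 y)(x\star_2 y)$ satisfies the five defining identities of a multiplicative Lie algebra, treating them in the order (1), (5), (2), (3), and finally (4), relying on the corresponding identities already known for $\star_1$ and $\star_2$. First I would record the preliminary consequences of the hypotheses that justify the rearrangements used throughout. For $i=1,2$, identity (5) for $\star_i$ shows that $G\star_i G$ is stable under conjugation, hence is a normal subgroup of $G$. Hypothesis (1) makes each $G\star_i G$ abelian, and hypothesis (2) says every generator of $G\star_1 G$ commutes with every generator of $G\star_2 G$; passing to arbitrary words in the generators and their inverses yields $[G\star_1 G,\,G\star_2 G]=1$. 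Hence $H:=(G\star_1 G)(G\star_2 G)$ is an abelian normal subgroup of $G$, and since $x\star y\in H$ for all $x,y$, also $G\star G\subseteq H$. The facts I use repeatedly are: conjugation by an element of $G\star_1 G$ fixes every element of $G\star_1 G$ and of $G\star_2 G$ (and symmetrically with the indices interchanged), and any two elements of $H$ commute.

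Identities (1) and (5) are immediate: $x\star x=(x\star_1 x)(x\star_2 x)=1$, and, since conjugation is an automorphism of $G$, ${}^z(x\star y)={}^z(x\star_1 y)\,{}^z(x\star_2 y)=({}^zx\star_1 {}^zy)({}^zx\star_2 {}^zy)={}^zx\star {}^zy$. For identity (2), expanding via the corresponding identity for $\star_1$ and $\star_2$ gives
\[
x\star(yz)=(x\star_1 y)\,{}^y(x\star_1 z)\,(x\star_2 y)\,{}^y(x\star_2 z);
\]
the middle factors ${}^y(x\star_1 z)={}^yx\star_1 {}^yz\in G\star_1 G$ and $(x\star_2 y)\in G\star_2 G$ commute by hypothesis (2), so the product rearranges to $(x\star_1 y)(x\star_2 y)\,{}^y(x\star_1 z)\,{}^y(x\star_2 z)=(x\star y)\,{}^y(x\star z)$. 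Identity (3) follows by the same argument with the two factors interchanged, using that $(x\star_1 z)\in G\star_1 G$ and ${}^x(y\star_2 z)\in G\star_2 G$ commute.

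Identity (4) is the crux. Using the already-verified identity (3) for $\star$ (or, equivalently, the distributivity of $\star_1$ and $\star_2$ directly) together with the facts that $G\star_1 G$ is abelian and commutes with $G\star_2 G$, one expands $(x\star y)\star {}^yz=\big((x\star_1 y)(x\star_2 y)\big)\star {}^yz$ into the product of the four factors $(x\star_i y)\star_j {}^yz$, $i,j\in\{1,2\}$ — the conjugations thrown up by identity (3) collapse by the commutation facts, and the order of the four factors is immaterial since they lie in $H$. Doing the same for the two cyclic analogues $(y\star z)\star {}^zx$ and $(z\star x)\star {}^xy$ and multiplying the three together yields a product of twelve factors, all inside the abelian group $H$, which can therefore be reordered freely. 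Grouping them one obtains the three ``pure $\star_1$'' factors $\big((x\star_1 y)\star_1 {}^yz\big)\big((y\star_1 z)\star_1 {}^zx\big)\big((z\star_1 x)\star_1 {}^xy\big)$, whose product is $1$ by identity (4) for $\star_1$; the three ``pure $\star_2$'' factors, whose product is $1$ by identity (4) for $\star_2$; and the six remaining ``mixed'' factors, which, reordered, are precisely the left-hand side of hypothesis (3), hence multiply to $1$. Therefore $\big((x\star y)\star {}^yz\big)\big((y\star z)\star {}^zx\big)\big((z\star x)\star {}^xy\big)=1$, which is identity (4).

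I expect the main obstacle to be the bookkeeping in identity (4): one must check that each of the twelve factors genuinely lies in $H$ (so that the reshuffling is valid) and that the six mixed factors assemble exactly into the left-hand side of hypothesis (3) — the two ``pure'' triples then matching the Jacobi identities for $\star_1$ and $\star_2$ being automatic. A smaller recurring technicality, used throughout, is that each conjugation ${}^{(x\star_1 y)}\big((x\star_2 y)\star_j {}^yz\big)$ arising while expanding $(ab)\star c$ must be shown to be trivial, which is precisely where hypotheses (1) and (2) enter. The remaining identities (1), (2), (3), (5) are routine.
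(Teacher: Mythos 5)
Your proposal is correct and follows essentially the same route as the paper's proof: verify identities (1), (2), (3), (5) directly with the commutation hypotheses justifying the rearrangements, and for identity (4) expand each $(x\star y)\star{}^yz$ into four factors via the bilinearity of $\star_1,\star_2$, kill the resulting conjugations using hypotheses (1)--(2), and regroup the twelve commuting factors into the two Jacobi identities plus the left-hand side of hypothesis (3). Your packaging of the commutation facts into the abelian normal subgroup $H=(G\star_1 G)(G\star_2 G)$ is a slight tidying of what the paper does implicitly, but it is not a different argument.
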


\begin{proof} Let $x, y, z \in G.$  Then
\begin{enumerate}
	
		\item $x \star x = (x\star_1 x)(x\star_2 x) = 1$.
		\item  $ x\star(y \cdot z)=(x\star_1 (y \cdot z))(x\star_2 (y \cdot z)) = (x\star_1 y) ^y(x\star_1 z) (x\star_2 y) ^y(x\star_2 z) = (x\star_1 y) (x\star_2 y) ^y(x\star_1 z) ^y(x\star_2 z) = (x\star y)\cdot{^y(x\star z)} $.
		\item $ (x \cdot y)\star z= ((x \cdot y)\star_1 z)  ((x \cdot y)\star_2 z) = {^x( y\star_1 z)} ( x\star_1 z)  {^x( y\star_2 z)} ( x\star_2 z) = {^x( y\star_1 z)} {^x( y\star_2 z)} ( x\star_1 z)   ( x\star_2 z) = {^x(y\star z)} \cdot (x\star z)$.
		\item We have
		\begin{align*}
		((x\star y)\star{^yz}) & = ((x\star_1 y) (x\star_2 y) \star {^yz}) \\& = ((x\star_1 y) (x\star_2 y) \star_1 {^yz}) ((x\star_1 y) (x\star_2 y) \star_2 {^yz}) \\ & = ( (x\star_2 y)(x\star_1 y) \star_1 {^yz}) ((x\star_1 y) (x\star_2 y) \star_2 {^yz}) \\& = {^{(x\star_2 y)}}((x\star_1 y) \star_1 {^yz}) ((x\star_2 y) \star_1 {^yz}) {^{(x\star_1 y)}}( (x\star_2 y) \star_2 {^yz}) ( (x\star_1 y) \star_2 {^yz}) \\& = ((x\star_1 y)\star_1{^yz}) ((x\star_2 y)\star_1{^yz})((x\star_2 y)\star_2{^yz}) ((x\star_1 y)\star_2{^yz}).
		\end{align*}
	Hence,   
		\begin{align*}
			((x\star y)\star{^yz})((y\star z)\star{^zx})((z\star x)\star {^xy}) & = ((x\star_1 y)\star_1{^yz})  ((x\star_2 y)\star_1{^yz}) ((x\star_2 y)\star_2{^yz}) \\& \hspace{-4.3cm} ((x\star_1 y)\star_2{^yz}) ((y\star_1 z)\star_1{^zx}) ((y\star_2 z)\star_1{^zx})((y\star_2 z)\star_2{^zx}) ((y\star_1 z)\star_2{^zx})  \\& \hspace{-4.3cm}    ((z\star_1 x)\star_1{^xy}) ((z\star_2 x)\star_1{^xy})((z\star_2 x)\star_2{^xy}) ((z\star_1 x)\star_2{^xy}) \\&  \hspace{-4.8cm} = ((x\star_1 y)\star_1{^yz})((y\star_1 z)\star_1{^zx}) ((z\star_1 x)\star_1{^xy})    ((x\star_2 y)\star_2{^yz}) ((y\star_2 z)\star_2{^zx}) \\& \hspace{-4.3cm} ((z\star_2 x)\star_2{^xy}) ((x\star_1 y)\star_2{^yz}) ((x\star_2 y)\star_1{^yz})((y\star_2 z)\star_1{^zx}) ((y\star_1 z)\star_2{^zx})     \\&  \hspace{-4.3cm} ((z\star_2 x)\star_1{^xy})  ((z\star_1 x)\star_2{^xy}) \\&  \hspace{-4.8cm} =  ((x\star_1 y)\star_2{^yz}) ((x\star_2 y)\star_1{^yz})((y\star_2 z)\star_1{^zx}) ((y\star_1 z)\star_2{^zx})      ((z\star_2 x)\star_1{^xy}) \\&  \hspace{-4.3cm}  ((z\star_1 x)\star_2{^xy}) \\&  \hspace{-4.8cm} = 1.
		\end{align*}
		\item $ ^z(x\star y) = {^z((x\star_1 y) (x\star_2 y))} = {(^zx\star_1 {^zy}) (^zx\star_2 {^zy})} =(^zx\star ^zy).$  
\end{enumerate} 
This shows that $\star$ is a multiplicative Lie algebra structure on $G.$
\end{proof}

\begin{lemma} \label{C6}
Let $ (G,\cdot,\star_1)$ be a multiplicative Lie algebra such that $[G, G]\subseteq LZ(G).$ Then for any other multiplicative Lie algebra structure $\star_2$ on $G,$ we have $[(x\star_1 y), (z \star_2 w)] = 1 $ for all  $ x, y, z, w \in G .$  
\end{lemma}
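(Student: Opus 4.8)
The plan is to prove something slightly stronger than what is asked, namely that $G\star_1 G$ is contained in the group centre $Z(G)$. Once this is known the lemma is immediate: $x\star_1 y$ is central, so it commutes with every element of $G$, in particular with $z\star_2 w$. Thus the role of $\star_2$ in the statement is essentially vacuous — all of the work takes place on the $\star_1$-side, and the hypothesis $[G,G]\subseteq LZ(G)$ is exactly what is needed to make each $\star_1$-product central.

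\textbf{Key steps.} First I would fix $x,y\in G$ and let $g$ be an arbitrary element of $G$. Applying part (4) of Proposition \ref{MLA} to the structure $\star_1$ gives the identity $[(x\star_1 y),g]=[x,y]\star_1 g$, where on the left $[\,\cdot\,,\cdot\,]$ is the ordinary group commutator and $[x,y]=xyx^{-1}y^{-1}$. Next I would note that $[x,y]$ is a commutator, hence lies in $[G,G]$, and therefore in $LZ(G)$ by hypothesis; invoking the defining property of the Lie centre (membership in $LZ(G)$ forces $\star_1$-annihilation of all of $G$) yields $[x,y]\star_1 g=1$. Combining the two displays, $[(x\star_1 y),g]=1$ for every $g\in G$, i.e. $x\star_1 y\in Z(G)$; since $x,y$ were arbitrary, $G\star_1 G\subseteq Z(G)$. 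Finally, for the statement itself: given $x,y,z,w\in G$, the element $x\star_1 y$ belongs to $Z(G)$ and so commutes with $z\star_2 w\in G$, which is precisely $(x\star_1 y)(z\star_2 w)=(z\star_2 w)(x\star_1 y)$, that is, $[(x\star_1 y),(z\star_2 w)]=1$.

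\textbf{Main obstacle.} There is no genuine obstacle here; the argument is short and uses only Proposition \ref{MLA}(4) together with the hypothesis. The single point that deserves care is reading the hypothesis correctly: the content of ``$[x,y]\in LZ(G)$'' that is actually used is exactly ``$[x,y]\star_1 b=1$ for all $b\in G$''. (Equivalently, one may phrase the whole lemma as the observation that $[G,G]\subseteq LZ(G)$ forces the $\star_1$-square $G\star_1 G$ to lie in $Z(G)$, after which the commutation with any $\star_2$-product is automatic.)
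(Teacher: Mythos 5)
Your proof is correct and follows essentially the same route as the paper: both apply Proposition \ref{MLA}(4) to get $[(x\star_1 y),g]=[x,y]\star_1 g$ and then use $[G,G]\subseteq LZ(G)$ to kill the right-hand side. The only (harmless) difference is that you let $g$ range over all of $G$ and record the slightly stronger intermediate fact $G\star_1 G\subseteq Z(G)$, whereas the paper substitutes $g=z\star_2 w$ directly.
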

\begin{proof}
Let $x\star_1 y \in G\star_1 G$ and  $z\star_2 w \in G\star_2 G.$ Then by Proposition \ref{MLA}, we have $[(x\star_1 y), (z \star_2 w)] = [x, y] \star_1 (z \star_2 w).$ Since $[G, G]\subseteq LZ(G),$ $[(x\star_1 y), (z \star_2 w)] = 1.$   	
\end{proof} 

\begin{theorem}
Let $G$ be a nilpotent group of class $2.$ Suppose $\star_1$ is a multiplicative Lie algebra structure on $G$ such that $[G, G]\subseteq LZ(G).$ Then, for any other  multiplicative Lie algebra structure $\star_2$ on $G,$ the binary operation $\star$ defined as  $x \star y = (x\star_1 y)(x\star_2 y) $ is a multiplicative Lie algebra structure on $G,$ provided  $ ((x\star_1 y){\star_2} {^yz})((x\star_2 y){\star_1} {^yz})((y\star_1 z)\star_2{^zx})((y\star_2 z)\star_1{^zx})((z\star_1 x)\star_2 {^xy})((z\star_2 x)\star_1 {^xy})=1 $.   
\end{theorem}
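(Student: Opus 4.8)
The plan is to obtain this statement as an immediate specialization of Theorem~\ref{C5}: it suffices to check that, under the present hypotheses, the two structures $\star_1$ and $\star_2$ satisfy conditions (1), (2) and (3) of that theorem, and then invoke it.

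For condition (1), recall that $G$ is nilpotent of class $2$. By Proposition~\ref{C1}(2), for \emph{any} multiplicative Lie algebra structure $\diamond$ on such a group the subgroup $G \diamond G$ is abelian; applying this to $\diamond = \star_1$ and to $\diamond = \star_2$ shows that both $G \star_1 G$ and $G \star_2 G$ are abelian. For condition (2), I would invoke Lemma~\ref{C6}: since $(G,\cdot,\star_1)$ satisfies $[G,G] \subseteq LZ(G)$ by hypothesis, Lemma~\ref{C6} gives $[(x \star_1 y),(z \star_2 w)] = 1$ for all $x,y,z,w \in G$, which is exactly the commutation relation $(x \star_1 y)(z \star_2 w) = (z \star_2 w)(x \star_1 y)$ required in Theorem~\ref{C5}(2) (the relation being symmetric in its two factors, no separate check of the ``reversed'' identity is needed). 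Condition (3) of Theorem~\ref{C5} is word-for-word the proviso assumed in the statement. Hence all three hypotheses of Theorem~\ref{C5} are met, and that theorem yields that $x \star y = (x \star_1 y)(x \star_2 y)$ is a multiplicative Lie algebra structure on $G$.

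In this argument there is no real obstacle: the substance is entirely in Theorem~\ref{C5}, and the two extra assumptions of the present theorem --- nilpotency of class $2$ and $[G,G] \subseteq LZ(G)$ --- are precisely what is needed to discharge hypotheses (1) and (2) automatically, leaving only hypothesis (3) to be imposed by hand. The one point worth stating carefully in the write-up is the passage from the commutator form $[(x \star_1 y),(z \star_2 w)] = 1$ delivered by Lemma~\ref{C6} to the product form of Theorem~\ref{C5}(2); this is trivial, but it is where the chain of implications is closed.
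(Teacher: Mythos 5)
Your proposal is correct and is essentially identical to the paper's own proof, which simply cites Proposition \ref{C1}, Theorem \ref{C5}, and Lemma \ref{C6}; you have merely spelled out which hypothesis of Theorem \ref{C5} each cited result discharges. No issues.
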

\begin{proof}
This follows from Proposition \ref{C1}, Theorem \ref{C5},  and Lemma \ref{C6}. 
\end{proof}

\section{Extension of a lie ring by a lie ring}	
In this section, we provide a method to compute a multiplicative Lie algebra structure on a given group $G$ which is  an extension of a Lie ring $H$ by a Lie ring $K$.	

Consider an extension $ \mathcal{E}(H, K):	{1}\longrightarrow  H\overset{i} \longrightarrow G\overset{\beta} \longrightarrow  K \longrightarrow {1}$ of $H$ by $K,$  where $H$  and $K$ are Lie rings. Let $t: K \to G $ be a section. Then every element of the group $G$ can be uniquely expressed in the form $ht(x)$ for some $h\in H$ and $x\in K.$ Then by (\cite{RL}), the group operation $``\cdot"$  in $G$ is given by
\begin{align*}
	ht(x)\cdot kt(y) &= h\sigma_x^t(k)f^t(x,y)t(xy) \tag*{(1)}
\end{align*}
where $\sigma_x^t: H \to H $ is a group isomorphism defined by $\sigma_x^t(h)=t(x)ht(x)^{-1}$ and  $f^t: K\times K \longrightarrow H$ is a map satisfying
\begin{align*}
	f^t(1,x) = f^t(x,1) = 1, \ \text{and} \ f^t(x,y)f^t(xy, z) = \sigma_x^t(f^t(y,z))f^t(x, yz) \tag*{(2)}
\end{align*}
\begin{remark}
	The map $\sigma^t: K \to Aut(H)$ defined by $\sigma^t(x) = \sigma^t_x $ is independent of the choice of section $t$ (see \cite{RL}, p. 378). So, now onwards we denote $\sigma^t$  by $\sigma.$
\end{remark}

In the following lemma, we mention further identities satisfied by $\sigma_x$ and $f^t$.
\begin{lemma}
	To each $x\in K$, the homomorphism $ \sigma_x : H \to H$ and the map $f^t: K\times K \longrightarrow H$ also satisfy the following identities for every $x, y \in K$:
	
	\begin{enumerate}
		\item $\sigma_{xy}(f^t(x^{-1}, x)) = f^t(y, x) f^t(xy, x^{-1})$
		\item $\sigma_{y}(f^t(x, x^{-1})) = f^t(y, x) f^t(xy, x^{-1})$
		\item $\sigma_{xy}(f^t(x^{-1}, x)) = \sigma_{y}(f^t(x, x^{-1}))$
	\end{enumerate}
\end{lemma}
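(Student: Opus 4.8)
The plan is to derive all three identities directly from the cocycle condition~(2), namely $f^t(x,y)f^t(xy,z) = \sigma_x(f^t(y,z))f^t(x,yz)$, by making judicious substitutions and then invoking the hypothesis that $H$ and $K$ are Lie rings, hence abelian groups. Since the section $t$ can be chosen with $t(1)=1$, we have the normalization $f^t(1,x)=f^t(x,1)=1$, which is what makes the substitutions collapse.

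First I would prove identity~(2) of the lemma. In the cocycle identity~(2) of the text, substitute $x\mapsto y$, $y\mapsto x$, $z\mapsto x^{-1}$ to get
\[
f^t(y,x)\,f^t(yx,x^{-1}) = \sigma_y(f^t(x,x^{-1}))\,f^t(y,xx^{-1}) = \sigma_y(f^t(x,x^{-1})),
\]
using $f^t(y,1)=1$. Because $K$ is abelian we have $yx=xy$, so the left-hand side equals $f^t(y,x)f^t(xy,x^{-1})$, which is exactly $\sigma_y(f^t(x,x^{-1})) = f^t(y,x)f^t(xy,x^{-1})$. Next, for identity~(1) of the lemma, substitute $x\mapsto xy$, $y\mapsto x^{-1}$, $z\mapsto x$ in the cocycle identity to obtain
\[
f^t(xy,x^{-1})\,f^t(xyx^{-1},x) = \sigma_{xy}(f^t(x^{-1},x))\,f^t(xy,x^{-1}x) = \sigma_{xy}(f^t(x^{-1},x)).
\]
Again $K$ abelian gives $xyx^{-1}=y$, so $f^t(xy,x^{-1})f^t(y,x) = \sigma_{xy}(f^t(x^{-1},x))$; since $H$ is abelian the two factors on the left commute, yielding $\sigma_{xy}(f^t(x^{-1},x)) = f^t(y,x)f^t(xy,x^{-1})$. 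Finally, identity~(3) of the lemma is immediate: by~(1) and~(2) both $\sigma_{xy}(f^t(x^{-1},x))$ and $\sigma_y(f^t(x,x^{-1}))$ equal $f^t(y,x)f^t(xy,x^{-1})$, so they coincide.

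There is no serious obstacle here; the only points requiring care are (i) choosing the correct triples to feed into~(2) so that one factor becomes $f^t(\cdot,1)=1$, and (ii) being explicit about \emph{which} abelianness is used where — commutativity of $K$ to rewrite $yx=xy$ and $xyx^{-1}=y$ inside the arguments of $f^t$, and commutativity of $H$ to reorder the two $H$-valued factors so the products match the stated form. If one wishes to avoid relying on $t(1)=1$, one can instead insert the general relation $f^t(x,1)=f^t(1,x)=1$ from~(2), which holds for any normalized section and is all that is needed.
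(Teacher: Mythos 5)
Your proof is correct. For identity (2) you do exactly what the paper does: substitute $x\mapsto y$, $y\mapsto x$, $z\mapsto x^{-1}$ into the cocycle relation and use $f^t(y,1)=1$ together with $yx=xy$. For identity (1), however, you take a genuinely different route. The paper computes the commutator $[t(x),t(y)]$ in two ways: once as $f^t(x,y)f^t(y,x)^{-1}$ (from $t(xy)=t(yx)$), and once by expanding $t(x)t(y)t(x)^{-1}t(y)^{-1}$ through the factor set, obtaining $f^t(x,y)\,\sigma_{xy}(f^t(x^{-1},x)^{-1})\,f^t(xy,x^{-1})$; equating the two expressions and cancelling gives (1). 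You instead feed the triple $(xy,\,x^{-1},\,x)$ directly into the cocycle identity, which collapses immediately to $f^t(xy,x^{-1})f^t(y,x)=\sigma_{xy}(f^t(x^{-1},x))$ and then to the stated form using commutativity of $H$. Your version is more uniform (both nontrivial identities come from the same single substitution trick) and slightly more elementary, since it avoids introducing the commutator $[t(x),t(y)]$ at all; the paper's version has the side benefit of recording the useful formula $[t(x),t(y)]=f^t(x,y)f^t(y,x)^{-1}$, which reappears implicitly in the later computations of Section 4. Your bookkeeping of where commutativity of $K$ (to rewrite $xyx^{-1}=y$ and $yx=xy$) and of $H$ (to reorder the two $H$-valued factors) enters is accurate, and identity (3) follows from (1) and (2) exactly as in the paper.
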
 
\begin{proof} 
	\textit{(1)} Since $K$ is an abelian group,  $t(xy) = t(yx)$ for every $x, y \in K.$ Thus $f^t(x, y)^{-1} t(x)t(y) = f^t(y, x)^{-1} t(y)t(x).$ This implies that 
	\begin{align*}
		\hspace{-2cm}[t(x), t(y)] &= f^t(x, y)f^t(y, x)^{-1} \tag*{(\textit{i})} \\
		\text{Also,} \ [t(x), t(y)] = t(x) t(y)t(x)^{-1}t(y)^{-1} &= f^t(x, y) t(xy) f^t(x^{-1}, x)^{-1} t(x^{-1})t(y)^{-1} \\& = f^t(x, y) \sigma_{xy}(f^t(x^{-1}, x)^{-1})  t(xy) t(x^{-1})t(y)^{-1} \\&= f^t(x, y) \sigma_{xy}(f^t(x^{-1}, x)^{-1})  f^t(xy, x^{-1}). \tag*{(\textit{ii})} 
	\end{align*}
	Equating $(i)$ and $(ii)$, we deduce the result.
	
	\textit{(2)} In equation $(2)$ replace $x \rightarrowtail y, \ y \rightarrowtail x$ and $z \rightarrowtail x^{-1},$ we get desired result.
	
	\textit{(3)} From identities \textit{(1)} and \textit{(2)}, we can deduce \textit{(3)}.
\end{proof}
Now, $\beta(t(x)\star h)= 1 $. Thus, we have a  homomorphism $\Gamma_x^t: H \to H$ given by $\Gamma_x^t(h) = t(x)\star h$.\\
Also, $\beta((t(x)\star t(y))t(x\star y)^{-1})= 1 $. This gives us a map $h^t:K\times K \to H$ such that  $h^t(x,y)= (t(x)\star t(y))t(x\star y)^{-1}$, where $h^t$ satisfies 
\begin{align*}
	h^t(x,1)=h^t(1,x)=h^t(x,x)=1. \tag*{(3)}
\end{align*}
Since $h \star kt(y) = (h\star k){^k(h\star t(y))} = (h\star k){(h\star t(y))} = (h\star k) \Gamma^t_y(h^{-1})$  and $$ t(x) \star kt(y) = (t(x) \star k) {^k((t(x)\star t(y) )} = \Gamma^t_x(k) h^t(x, y) {^k(t(x \star y) )}.$$
Therefore, we get  the following expression:
\begin{align*}
	ht(x){\star} kt(y) & =  {^h(t(x) \star kt(y))}(h\star kt(y)) \\& = \Gamma^t_x(k) h^t(x, y) {^{hk}(t(x \star y) )} (h \star k) \Gamma^t_y(h^{-1}) \\& = hk \Gamma^t_x(k) \sigma_{(x \star y)} (h^{-1}k^{-1}(h \star k) \Gamma^t_y(h^{-1})) h^t(x, y) t(x \star y).  \tag*{(4)}
\end{align*}
 On the other hand, we have
 \begin{align*}
 	ht(x){\star} kt(y) & =  {(ht(x) \star k)}{^k(ht(x)\star t(y))} \\& =  {^h(t(x) \star k)} (h \star k) {^k(^h(t(x)\star t(y))(h \star t(y))) }  \\& = hk \Gamma^t_x(k) (h\star k)  h^t(x, y) t(x \star y) h^{-1}k^{-1} \Gamma^t_y(h^{-1})\\& = hk \Gamma^t_x(k) (h\star k)  h^t(x, y)  \sigma_{(x\star y)} (h^{-1}k^{-1} \Gamma^t_y(h^{-1})) t(x \star y).  \tag*{(5)}
 \end{align*}
 On comparing the equations $(4)$ and $(5)$, we get
 \begin{align*}
 	\sigma_{(x\star y)} (h\star k) =  h\star k. \tag*{(6)}
 \end{align*}
This gives us a multiplicative Lie algebra structure $``\star"$ in $G$ as follows: 
\begin{align*}
	ht(x){\star} kt(y) = hk \Gamma^t_x(k) (h \star k) \sigma_{(x \star y)} (h^{-1}k^{-1} \Gamma^t_y(h^{-1})) h^t(x, y) t(x \star y). \tag*{(7)}
\end{align*}
Next, we will examine some of the properties of the function $h^t$.

Consider the expression 
\begin{align*}
	(ht(x)\cdot kt(y))\star lt(z) &= h\sigma_x(k) f^t(x, y) t(xy)  \star lt(z) \\& \hspace{-2.6cm}= hl\sigma_x(k) \Gamma^t_{xy}(l) ( h\sigma_x(k) f^t(x, y) \star l) f^t(x, y) \sigma_{(xy) \star z}(h^{-1}l^{-1}\sigma_x(k^{-1}) {f^t(x, y)}^{-1}  \\& \hspace{-2.2cm} \cdot \Gamma^t_{z}(h^{-1}\sigma_x(k^{-1}) f^t(x, y)^{-1}))   h^t(xy ,z) t(xy \star z). \tag*{(8)}
\end{align*} 
On the other hand, we have
\begin{align*}
	(ht(x)\cdot kt(y))\star lt(z)&={^{ht(x)}}(kt(y)\star lt(z)) \cdot (ht(x)\star lt(z)) \\&\hspace{-3.6cm}= ht(x) (kl \Gamma^t_y(l) (k \star l) \sigma_{(y\star z)}(k^{-1}l^{-1}\Gamma^t_z(k^{-1}))h^t(y ,z )t(y \star z)) t(x)^{-1}h^{-1}  \\& \hspace{-3.2cm}  \cdot (hl\Gamma^t_x(l)(h \star l)\sigma_{(x\star z)}(h^{-1}l^{-1})\Gamma^t_z(h^{-1}))h^t(x ,z )t(x \star z))
   \\& \hspace{-3.6cm} = h\sigma_x (kl \Gamma^t_y(l) (k \star l) \sigma_{(y\star z)}(k^{-1}l^{-1}\Gamma^t_z(k^{-1}))h^t(y ,z)) f^t(x, y\star z) t(x(y\star z)) t(x^{-1}) \\& \hspace{-3.2cm}\cdot f^t(x, x^{-1})^{-1}   (l \Gamma^t_x(l) (h \star l) \sigma_{(x\star z)}(h^{-1}l^{-1}\Gamma^t_z(h^{-1})) h^t(x ,z )t(x \star z))
    \\& \hspace{-3.6cm} = h\sigma_x (kl \Gamma^t_y(l) (k \star l) \sigma_{(y\star z)}(k^{-1}l^{-1}\Gamma^t_z(k^{-1}))h^t(y ,z)) f^t(x, y\star z) f^t(x(y\star x), x^{-1}) t(y\star z)  \\& \hspace{-3.2cm}\cdot  f^t(x, x^{-1})^{-1}  (l \Gamma^t_x(l) (h \star l) \sigma_{(x\star z)}(h^{-1}l^{-1}\Gamma^t_z(h^{-1})) h^t(x ,z )t(x \star z))
     \\& \hspace{-3.6cm} = h\sigma_x (kl \Gamma^t_y(l) (k \star l) \sigma_{(y\star z)}(k^{-1}l^{-1}\Gamma^t_z(k^{-1}))h^t(y ,z)) f^t(x, y\star z) f^t(x(y\star x), x^{-1}) \\& \hspace{-3.2cm}\cdot  \sigma_{y\star z} (f^t(x, x^{-1})^{-1}) \sigma_{(y\star z)}(l \Gamma^t_x(l) (h \star l) \sigma_{(x\star z)}(h^{-1}l^{-1}\Gamma^t_z(h^{-1})) h^t(x ,z )) t(y\star z) t(x \star z)
     \\& \hspace{-3.6cm} = h\sigma_x (kl \Gamma^t_y(l) (k \star l) \sigma_{(y\star z)}(k^{-1}l^{-1}\Gamma^t_z(k^{-1}))h^t(y ,z)) f^t(x, y\star z) f^t(x(y\star x), x^{-1})  \\& \hspace{-3.2cm}\cdot  f^t(y\star z, x)^{-1} f^t(x(y\star x), x^{-1})^{-1}  \sigma_{(y\star z)}(l \Gamma^t_x(l) (h \star l) \sigma_{(x\star z)}(h^{-1}l^{-1}\Gamma^t_z(h^{-1})) h^t(x ,z )) \\& \hspace{-3.2cm}\cdot f^t(y\star z, x \star z) t((y\star z)(x \star z))
      \\& \hspace{-3.6cm} = h\sigma_x (kl \Gamma^t_y(l) (k \star l) \sigma_{(y\star z)}(k^{-1}l^{-1}\Gamma^t_z(k^{-1}))h^t(y ,z)) f^t(x, y\star z)  f^t(y\star z, x)^{-1}  \\& \hspace{-3.2cm}\cdot \sigma_{(y\star z)}(l \Gamma^t_x(l) (h \star l) \sigma_{(x\star z)}(h^{-1}l^{-1}\Gamma^t_z(h^{-1})) h^t(x ,z )) f^t(y\star z, x \star z) t(xy \star z). \tag*{(9)}
\end{align*} 
On comparing the equations $(8)$ and $(9)$, we get 
\begin{align*}
	l \Gamma^t_{xy}(l) f^t(x, y) ( \sigma_x(k) f^t(x, y) \star l) \sigma_{(xy) \star z}(\sigma_x(k^{-1}) {f^t(x, y)}^{-1}   \Gamma^t_{z}(\sigma_x(k^{-1}) \\& \hspace{-12.3cm} \cdot  f^t(x, y)^{-1}))   h^t(xy ,z) \\& \hspace{-12.7cm} = \sigma_x (l \Gamma^t_y(l) (k \star l) \sigma_{(y\star z)}(k^{-1}l^{-1}\Gamma^t_z(k^{-1}))h^t(y ,z)) f^t(x, y \star z) f^t(y \star z, x)^{-1}\\& \hspace{-12.3cm} \cdot \sigma_{(y\star z)} (l \Gamma^t_x(l)   h^t(x ,z ))f^t(y\star z, x \star z). \tag*{(10)}
\end{align*}
Next, consider the expression 
\begin{align*}
	ht(x)\star (kt(y)\cdot lt(z)) &=  ht(x)\star k\sigma^t_y(l) f^t(y, z) t(yz)  \\& \hspace{-3.6cm}=  hk\sigma_y(l) f^t(y, z) \Gamma^t_x(k\sigma_y(l) f^t(y, z)) (h \star k\sigma^t_y(l) f^t(y, z) ) \sigma_{(x \star yz)}(h^{-1}k^{-1}\sigma_y(l^{-1}) \\& \hspace{-3.2cm}\cdot f^t(y, z)^{-1} \Gamma^t_{yz}(h^{-1})) h(x ,yz) t(x \star yz).
	\tag*{(11)}
\end{align*}
On the other hand, we have
\begin{align*}
	ht(x)\star (kt(y)\cdot lt(z)) &= (ht(x)\star kt(y)) \cdot {^{kt(y)}(ht(x)\star lt(z))} \\&\hspace{-3cm}=
	hk \Gamma^t_x(k) (h \star k) \sigma_{(x\star y)}(h^{-1}k^{-1}  \Gamma^t_y(h^{-1}))h^t(x ,y )t(x \star y) kt(y) (hl \Gamma^t_x(l) (h \star l) \\& \hspace{-2.6cm}\cdot  \sigma_{(x\star z)}(h^{-1}l^{-1} \Gamma^t_z(h^{-1}))h^t(x ,z )t(x \star z)) t(y)^{-1}k^{-1}
	 \\&\hspace{-3cm}=
	hk \Gamma^t_x(k) (h \star k) \sigma_{(x\star y)}(h^{-1}k^{-1} \Gamma^t_y(h^{-1}))h^t(x ,y )t(x \star y) k \sigma_y(hl \Gamma^t_x(l) (h \star l) \\& \hspace{-2.6cm}\cdot  \sigma_{(x\star z)}(h^{-1}l^{-1}\Gamma^t_z(h^{-1}))h^t(x ,z ))t(y)t(x \star z)) t(y)^{-1}k^{-1}
	\\&\hspace{-3cm}= hk \Gamma^t_x(k) (h \star k)\sigma_{(x\star y)}(h^{-1}k^{-1}  \Gamma^t_y(h^{-1}))h^t(x ,y )t(x \star y) k \sigma_y(hl \Gamma^t_x(l)  \\& \hspace{-2.6cm}\cdot  (h \star l) \sigma_{(x\star z)}(h^{-1}l^{-1} \Gamma^t_z(h^{-1}))h^t(x ,z )) f^t(y, x \star z) f^t(y(x\star z), y^{-1}) \\& \hspace{-2.6cm} \cdot \sigma_{y(x \star z)}(f^t(y^{-1}, y)^{-1} \sigma_{y^{-1}}(k^{-1}))t(^y(x \star z)) \\&\hspace{-3cm}= hk \Gamma^t_x(k) (h \star k) \sigma_{(x\star y)}(h^{-1}k^{-1}  \Gamma^t_y(h^{-1}))h^t(x ,y )t(x \star y) k \sigma_y(hl \Gamma^t_x(l) \\& \hspace{-2.6cm}\cdot  (h \star l) \sigma_{(x\star z)}(h^{-1}l^{-1} \Gamma^t_z(h^{-1}))h^t(x ,z )) f^t(y, x \star z) f^t(y(x\star z), y^{-1}) \\& \hspace{-2.6cm} \cdot \sigma_{y(x \star z)}(f^t(y^{-1}, y)^{-1} \sigma_{y^{-1}}(k^{-1}))t(^y(x \star z)) \\&\hspace{-3cm}= hk \Gamma^t_x(k)h^t(x ,y ) (h \star k) \sigma_{(x\star y)}(h^{-1} \Gamma^t_y(h^{-1})   \sigma_y(hl \Gamma^t_x(l) (h \star l) \sigma_{(x\star z)}(h^{-1}l^{-1} \\& \hspace{-2.6cm}\cdot\Gamma^t_z(h^{-1}))h^t(x ,z )) f^t(y, x \star z) f^t(x \star z, y)^{-1}  \sigma_{(x \star z)}(k^{-1})) f^t(x\star y, x\star z)\\& \hspace{-2.6cm} \cdot t((x\star y)(x \star z)).  \tag*{(12)}
\end{align*}
From equations $(11)$ and $(12)$, we get
\begin{align*}
	\sigma_y(l) f^t(y, z) \Gamma^t_x(\sigma_y(l) f^t(y, z)) (h \star \sigma^t_y(l) f^t(y, z) ) \sigma_{(x \star yz)}(h^{-1}\sigma_y(l^{-1})\\& \hspace{-11.7cm}\cdot f^t(y, z)^{-1}  \Gamma^t_{yz}(h^{-1})) h^t(x ,yz) \\& \hspace{-12.1cm} = \sigma_{(x\star y)}(h^{-1}\Gamma^t_y(h^{-1})   \sigma_y(hl \Gamma^t_x(l) (h \star l) \sigma_{(x\star z)}(h^{-1}l^{-1}\Gamma^t_z(h^{-1}))h^t(x ,z ))\\& \hspace{-11.7cm}\cdot f^t(y, x \star z) f^t(x \star z, y)^{-1}  ) f^t(x\star y, x\star z) h^t(x ,y ). \tag*{(13)}
\end{align*}
Now, consider the expression
\begin{align*}
	^{lt(z)}(ht(x)\star kt(y)) &= lt(z) hk \Gamma^t_x(k) (h \star k) \sigma_{(x\star y)}(h^{-1}k^{-1}\Gamma^t_y(h^{-1}))h^t(x ,y )t(x \star y) t(z)^{-1}l^{-1}
	\\&\hspace{-2cm} =l\sigma_z (hk \Gamma^t_x(k) (h\star k) \sigma_{(x\star y)}(h^{-1}k^{-1} \Gamma^t_y(h^{-1}))h^t(x ,y )) f^t(z, x\star y) t(z(x\star y)) t(z^{-1}) \\&\hspace{-1.6cm} \cdot f^t(z, z^{-1})^{-1} l^{-1}
	  \\&\hspace{-2cm} =l\sigma_z (hk \Gamma^t_x(k) (h\star k) \sigma_{(x\star y)}(h^{-1}k^{-1} \Gamma^t_y(h^{-1}))h^t(x ,y )) f^t(z, x\star y) f^t(z(x\star y), z^{-1})  \\&\hspace{-1.6cm} \cdot t(x\star y) f^t(z, z^{-1})^{-1} l^{-1} 
	   \\&\hspace{-2cm} =l\sigma_z (hk \Gamma^t_x(k) (h\star k) \sigma_{(x\star y)}(h^{-1}k^{-1} \Gamma^t_y(h^{-1}))h^t(x ,y )) f^t(z, x\star y) f^t(z(x\star y), z^{-1})  \\&\hspace{-1.6cm} \cdot  \sigma_{(x\star y)} (f^t(z, z^{-1})^{-1} l^{-1}) t(x\star y) 
	    \\&\hspace{-2cm} =l\sigma_z (hk \Gamma^t_x(k) (h\star k) \sigma_{(x\star y)}(h^{-1}k^{-1} \Gamma^t_y(h^{-1}))h^t(x ,y )) f^t(z, x\star y) f^t( x\star y, z)^{-1}   \\&\hspace{-1.6cm} \cdot  \sigma_{(x\star y)} (l^{-1}) t(x\star y). \tag*{(14)}
\end{align*}
Also, we have
\begin{align*}
	^{lt(z)}ht(x)\star {^{lt(z)}} kt(y) &=  lt(z)ht(x)t(z)^{-1}l^{-1}\star {lt(z)}kt(y)t(z)^{-1}l^{-1}
	 \\&\hspace{-3.4cm} = l\sigma _z(h)f^t(z, x) t(zx) t(z^{-1})f^t(z, z^{-1})^{-1} l^{-1}\star l\sigma _z(k)f^t(z, y) t(zy) t(z^{-1}) f^t(z, z^{-1})^{-1} l^{-1}
	  \\&\hspace{-3.4cm} = l\sigma _z(h)f^t(z, x) f^t(zx, z^{-1}) \sigma_x (f^t(z, z^{-1})^{-1} l^{-1}) t(x)\star l\sigma _z(k)f^t(z, y) f^t(zy, z^{-1}) \\&\hspace{-3cm}\cdot  \sigma_y (f^t(z, z^{-1})^{-1} l^{-1}) t(y)
	  \\&\hspace{-3.4cm} = l\sigma _z(h)f^t(z, x) f^t(x, z)^{-1} \sigma_x ( l^{-1}) t(x)\star l\sigma _z(k)f^t(z, y) f^t(y, z)^{-1}   \sigma_y ( l^{-1}) t(y)
	\\&\hspace{-3.4cm} = l^2\sigma _z(hk)f^t(z, x) f^t(x, z)^{-1} f^t(z, y) f^t(y, z)^{-1}  \sigma_x(l^{-1}) \sigma_y(l^{-1}) \Gamma^t_x(l\sigma _z(k)f^t(z, y) f^t(y, z)^{-1} \\&\hspace{-3cm}\cdot \sigma_y(l^{-1})) \cdot (l\sigma _z(h) f^t(z, x) f^t(x, z)^{-1} \sigma_x(l^{-1})\star l\sigma _z(k)f^t(z, y) f^t(y, z)^{-1} \sigma_y(l^{-1})) \\&\hspace{-3cm} \cdot \sigma_{(x\star y)}(l^{-2}\sigma_z(h^{-1}k^{-1}) f^t(z, x)^{-1} f^t(x, z)f^t(z, y)^{-1} f^t(y, z) \sigma_x(l)\sigma_y(l)  \Gamma^t_y(l^{-1}\sigma _z(h^{-1})\\&\hspace{-3cm}\cdot \sigma_x(l) f^t(z, x)^{-1} f^t(x, z) )) h^t(x, y) t(x \star y). \tag*{(15)} 
\end{align*}
From equations $(14)$ and $(15)$, we have
\begin{align*}
	\sigma_z (\Gamma^t_x(k) (h\star k) \sigma_{(x\star y)}(h^{-1}k^{-1}\Gamma^t_y(h^{-1}))h^t(x ,y )) f^t(z, x\star y) f^t(x\star y, z)^{-1}  \\& \hspace{-12cm} = lf^t(z, x) f^t(x, z)^{-1} f^t(z, y) f^t(y, z)^{-1}  \sigma_{x}(l^{-1}) \sigma_{y}(l^{-1}) \Gamma^t_x(l\sigma _z(k)f^t(z, y) f^t(y, z)^{-1} \\&\hspace{-11.7cm}\cdot \sigma_y(l^{-1})) (l\sigma _z(h) f^t(z, x) f^t(x, z)^{-1} \sigma_x(l^{-1})\star l\sigma _z(k)f^t(z, y) f^t(y, z)^{-1} \sigma_y(l^{-1})) \\&\hspace{-11.7cm}\cdot \sigma_{(x\star y)}(l^{-1}\sigma_z(h^{-1}k^{-1}) f^t(z, x)^{-1} f^t(x, z)f^t(z, y)^{-1} f^t(y, z) \sigma_{x}(l) \sigma_{y}(l)  \Gamma^t_y(l^{-1}\sigma _z(h^{-1}) \\&\hspace{-11.7cm}\cdot \sigma_x(l) f^t(z, x)^{-1} f^t(x, z) )) h^t(x, y). \tag*{(16)}
\end{align*}
Next, we have
\begin{align*}
	hk^2 \Gamma^t_{x}(k) \sigma_y(l) \sigma_z(k^{-1}) \sigma_{(x\star y)}(h^{-1}k^{-1}\Gamma^t_{y}(h^{-1})) f^t(y, z) f^t(z, y)^{-1} h^t(x, y)
    \cdot  (hk (h\star k) \\& \hspace{-14.2cm} \sigma_{x\star y}(h^{-1}k^{-1}\Gamma^t_{y}(h^{-1})) h^t(x, y) \star k \sigma_y(l) \sigma_z(k^{-1}) f^t(y, z) f^t(z, y)^{-1} ) 
     \cdot  \Gamma^t_{(x\star y)}(k \sigma_y(l) \sigma_z(k^{-1}) \\& \hspace{-14.2cm} f^t(y, z) f^t(z, y)^{-1})
	  \cdot  \sigma_{((x\star y) \star z )}(h^{-1}k^{-2} \Gamma^t_{x}(k^{-1}) \sigma_y(l^{-1}) \sigma_z(k) \sigma_{(x\star y)}(hk\Gamma^t_{y}(h)) f^t(y, z)^{-1} \\& \hspace{-14.2cm} f^t(z, y)  h^t(x, y)^{-1}  \Gamma^t_{z}(h^{-1}k^{-1} \Gamma^t_{x}(k^{-1})(k\star h) \sigma_{(x\star y)}(hk\Gamma^t_{y}(h))  h^t(x, y)^{-1})) h^t(x\star y, z)
	 \\& \hspace{-14.2cm} \cdot \sigma_{((x\star y) \star z )}(	kl^2 \Gamma^t_{y}(l) \sigma_z(h) \sigma_x(l^{-1}) \sigma_{(y\star z)}(k^{-1}l^{-1}\Gamma^t_{z}(k^{-1})) f^t(z, x) f^t(x, z)^{-1} h^t(y, z) \cdot  (kl \\& \hspace{-14.2cm} (k\star l) \sigma_{y\star z}(k^{-1}l^{-1}\Gamma^t_{z}(k^{-1})) h^t(y, z) \star l \sigma_z(h) \sigma_x(l^{-1}) f^t(z, x) f^t(x, z)^{-1} ) 
	  \cdot  \Gamma^t_{(y\star z)}(l \sigma_z(h) \\& \hspace{-14.2cm} \sigma_x(l^{-1}) f^t(z, x) f^t(x, z)^{-1})	
	   \cdot  \sigma_{((y\star z) \star x )}(k^{-1}l^{-2} \Gamma^t_{y}(l^{-1}) \sigma_z(h^{-1}) \sigma_x(l) \sigma_{(y\star z)}(kl\Gamma^t_{z}(k)) \\& \hspace{-14.2cm} f^t(z, x)^{-1} f^t(x, z) h^t(y, z)^{-1} \Gamma^t_{x}(k^{-1}l^{-1} \Gamma^t_{y}(l^{-1})(l\star k) \sigma_{(y\star z)}(kl\Gamma^t_{z}(k))  h^t(y, z)^{-1})) \\& \hspace{-14.2cm} h^t(y\star z, x)) \cdot  f^t((x\star y)\star z, (y\star z) \star x ) \cdot \sigma_{((x\star y)\star z)((y\star z)\star x)} (
	 	lh^2 \Gamma^t_{z}(h) \sigma_x(k)  \sigma_y(h^{-1})\\& \hspace{-14.2cm} \sigma_{(z\star x)}(l^{-1}h^{-1}\Gamma^t_{x}(l^{-1})) f^t(x, y) f^t(y, x)^{-1} h^t(z, x)
	  \cdot (lh (l\star h) \sigma_{z\star x}(l^{-1}h^{-1}\Gamma^t_{x}(l^{-1})) h^t(z, x) \\& \hspace{-14.2cm} \star h  \sigma_x(k) \sigma_y(h^{-1}) f^t(x, y) f^t(y, x)^{-1} ) \cdot \Gamma^t_{(z\star x)}(h \sigma_x(k) \sigma_y(h^{-1}) f^t(x, y) f^t(y, x)^{-1})
	   \\& \hspace{-14.2cm}  \cdot \sigma_{((z\star x) \star y )}(l^{-1}h^{-2} \Gamma^t_{z}(h^{-1}) \sigma_x(k^{-1}) \sigma_y(h) \sigma_{(z\star x)}(lh\Gamma^t_{x}(l)) f^t(x, y)^{-1} f^t(y, x) h^t(z, x)^{-1} \\& \hspace{-14.2cm} \Gamma^t_{y}(l^{-1}h^{-1} \Gamma^t_{z}(h^{-1})(h\star l) \sigma_{(z\star x)}(lh\Gamma^t_{x}(l))  h^t(z, x)^{-1})) h^t(z\star x, y))
	    \cdot  f^t(((x\star y)\star z) \\& \hspace{-14.2cm}( (y\star z)\star x), (z\star x)\star y) = 1.\tag*{(17)}	 
\end{align*}

On the basis of above discussion, we have the following proposition: 

\begin{proposition}	Let $G$ be a nilpotent group of class $2$ equipped with a  multiplicative Lie algebra structure $\star$ such that the induced multiplicative Lie algebra structure on $\frac{G}{[G,G]}$ is trivial. Then $\star$ is determined by a  map $h : \frac{G}{[G,G]} \times \frac{G}{[G,G]} \to [G,G]$ which satisfies the following conditions for all $x, y, z\in \frac{G}{[G,G]}:$
	\begin{enumerate}
		\item $h(x,x)=1  $;
		
		\item $  h(xy ,z) =  h(x ,z )h(y ,z )  $;
		
		\item $   h(x ,yz) = h(x ,y )h(x ,z ) $.
		
	\end{enumerate}
\end{proposition}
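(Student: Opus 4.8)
The plan is to show that under the stated hypothesis all the twisting data appearing in the general extension formulas (7)--(17) degenerates, so that $\star$ reduces to a single bilinear alternating pairing on $G/[G,G]$.

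\emph{Reductions from the hypothesis.} Triviality of the induced structure on $G/[G,G]$ means $(a\star b)[G,G]=[G,G]$, i.e. $a\star b\in[G,G]$ for all $a,b\in G$; hence $G\star G\subseteq[G,G]\subseteq Z(G)$ (the second inclusion being nilpotency of class $2$), and in particular $[G,G]$ is abelian and conjugation by any element of $[G,G]$ is trivial. Next I would prove $h\star g=g\star h=1$ for every $h\in[G,G]$, $g\in G$: for a single commutator this is Proposition \ref{MLA}(4), $[x,y]\star g=[(x\star y),g]=1$ since $x\star y\in Z(G)$; axiom (3) and Proposition \ref{MLA}(5) then propagate $(-)\star g=1$ through products and inverses, giving $h\star g=1$ for all $h\in[G,G]$, whence $g\star h=(h\star g)^{-1}=1$ by Proposition \ref{MLA}(2). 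It follows that the maps $\sigma_x$ and $\Gamma^t_x$ act as the identity on $[G,G]$, and that $t(x\star y)=1$ and $\sigma_{x\star y}=\mathrm{id}$ (as the induced $\star$ on $G/[G,G]$ is trivial).

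\emph{Coset-dependence and definition of $h$.} Writing $g=n\,t(x)$, $g'=m\,t(y)$ with $n,m\in[G,G]$ for a section $t$, axiom (3) gives $g\star g'={}^{n}(t(x)\star g')\,(n\star g')=t(x)\star g'$ (using $n\in Z(G)$ and $n\star g'=1$), and axiom (2) gives $t(x)\star g'=(t(x)\star m)\,{}^{m}(t(x)\star t(y))=t(x)\star t(y)$; thus $g\star g'$ depends only on the images $\bar g,\bar g'\in G/[G,G]$ (and not on the section). Hence there is a well-defined map $h:\tfrac{G}{[G,G]}\times\tfrac{G}{[G,G]}\to[G,G]$, $h(\bar g,\bar g'):=g\star g'$, and $\star$ is recovered from $h$ by $g\star g'=h(\bar g,\bar g')$, so $\star$ is determined by $h$. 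Equivalently, in equation (7) every factor except $h^t(x,y)$ becomes trivial by the previous step, leaving $ht(x)\star kt(y)=h^t(x,y)$, so $h$ is just the map $h^t$.

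\emph{The three identities.} These follow from the axioms once the conjugations are trivial and $[G,G]$ is abelian: (1) $h(\bar x,\bar x)=x\star x=1$ by axiom (1); (2) $h(\bar x\bar y,\bar z)=(xy)\star z={}^{x}(y\star z)(x\star z)=(y\star z)(x\star z)=h(\bar x,\bar z)\,h(\bar y,\bar z)$ by axiom (3) and $y\star z\in Z(G)$; (3) $h(\bar x,\bar y\bar z)=x\star(yz)=(x\star y)\,{}^{y}(x\star z)=(x\star y)(x\star z)=h(\bar x,\bar y)\,h(\bar x,\bar z)$ by axiom (2) and $x\star z\in Z(G)$. The only delicate point is the first step --- that $[G,G]$ is central and that products and inverses of its elements stay in the left $\star$-radical --- since it is precisely this that makes the general formulas collapse; everything else is routine.
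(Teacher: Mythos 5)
Your proof is correct. The substance is the same as the paper's: both arguments turn on the two key facts that $G\star G\subseteq[G,G]\subseteq Z(G)$ (so conjugation by star-values and by commutators is trivial) and that $[G,G]\star G=1$ (your $h\star g=1$ step, which is exactly the paper's appeal to Lemma \ref{LZG} giving $[G,G]\subseteq LZ(G)$, whence $\Gamma^t_x=1$). Where you differ is in presentation: the paper routes everything through the general extension formalism of Section 4 --- choosing a section $t$, specializing the maps $\sigma,\Gamma^t,f^t,h^t$ and equations $(2),(3),(6),(10),(13),(16),(17)$ until only $ht(x)\star kt(y)=h^t(x,y)$ survives, and then checking independence of the section --- whereas you work directly from the multiplicative Lie algebra axioms, show that $g\star g'$ depends only on the cosets of $g$ and $g'$ (which makes $h$ canonically defined with no section and no independence check needed), and verify conditions (1)--(3) explicitly from axioms (1)--(3) of the definition together with centrality of $G\star G$. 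Your route is more elementary and arguably more complete, since the paper never explicitly derives the three bilinearity identities from the degenerate forms of equations $(10)$ and $(13)$; the paper's route buys uniformity with the later metacyclic and two-generator $p$-group computations, where the twisting data does not collapse. One small point of care in your version: in $h(\bar x\bar y,\bar z)=(y\star z)(x\star z)$ the factors come out in the opposite order from the claimed $h(\bar x,\bar z)h(\bar y,\bar z)$, which is harmless only because $G\star G$ lies in $Z(G)$ --- worth saying explicitly.
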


\begin{proof}
	From Lemma \ref{LZG}, we have $[G,G]\subseteq Z(G)\cap LZ(G)$.  Thus, we have the following central extension of multiplicative Lie algebras:
	$$ \mathcal{L}:	{1}\longrightarrow [G,G]\overset{i} \longrightarrow G\overset{\pi} \longrightarrow  \frac{G}{[G,G]} \longrightarrow {1}.$$ Let $t$ be a section of $ \mathcal{L}$. Then every element of $G$ can be written in the form of $ht(x)$ for some  $h \in [G,G]$ and $x\in \frac{G}{[G,G]}$. Now, by above discussion, we have maps $\sigma: \frac{G}{[G,G]} \to Aut([G,G]), \ \Gamma^t :\frac{G}{[G,G]} \to End([G,G]), \  f^t: \frac{G}{[G,G]} \times \frac{G}{[G,G]} \to [G,G],$ and $ h^t: \frac{G}{[G,G]} \times \frac{G}{[G,G]} \to [G,G]$ satisfying the equations $(2), (3), (6), (10), (13), (16) $ and $(17).$ Since $[G,G]\subseteq Z(G)$, $\sigma_x(h) = t(x)ht(x)^{-1} = h$ for all $h \in [G,G]$ and $x\in \frac{G}{[G,G]}$.
	
	Moreover, since $[G,G]\subseteq LZ(G)$, we have  $\Gamma^t_x(k)=t(x)\star k = 1$ for all $k \in [G,G]$ and $x\in \frac{G}{[G,G]}$.
	Hence, we have 
		\begin{align*}
		ht(x)\star kt(y) =  h^t(x,y).
		\end{align*} 
Now, it suffices to show that the map  $h^t$ is independent to the choice of section $t$. Let $s$ be another section of $ \mathcal{L}$. Then there is a map $g:\frac{G}{[G,G]}\to [G,G]$	such that $s(x)=g(x)t(x)$ for each $x$, and then $h^s(x,y)=g(x\star y)^{-1}h^t(x,y)=h^t(x,y)$.  
\end{proof}

\begin{theorem} \label{theorem}
	Let	$H$ and $K$  be two Lie rings with Lie ring structure $\star_1$ and $\star_2$, respectively. Suppose we have maps $\sigma: K \to Aut(H), \ \Gamma : K \to End(H), \  f: K \times K \to H,$ and $ h: K \times K \to H$ satisfying the equations $(2), (3), (6), (10), (13), (16) $ and $(17).$ 
	
Define the following two operations on $G= H \times K$
	\begin{align*} 
	(h,x)\cdot (k,y) &= (h\sigma_x(k)f(x, y), xy)\\
		(h,x){\star} (k,y) &= (hk \Gamma_x(k)(h \star_1 k) \sigma_{(x \star_2 y)} (h^{-1}k^{-1} \Gamma_y(h^{-1})) h(x, y), x \star_2 y),
	\end{align*}
	$ \text{for all} \ (h,x), (k,y) \in G.$ Then $(G, \cdot, \star)$ is a multiplicative Lie algebra.
\end{theorem}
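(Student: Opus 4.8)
The plan is to read this theorem as the exact converse of the computations carried out just above. There, starting from a multiplicative Lie algebra $G$ realised as the extension $\mathcal{E}(H,K)$, we \emph{extracted} maps $\sigma,\Gamma,f,h$ and showed that they must satisfy (2), (3), (6), (10), (13), (16) and (17); here we are handed abstract maps obeying those relations and must rebuild the multiplicative Lie algebra. What makes the converse inexpensive is that \emph{none} of the displayed manipulations (1), (4), (5), (8)--(17) used the multiplicative Lie algebra hypothesis on $G$: each was a purely formal computation with the twisted product (1) and the formula (7) for $\star$ --- which, under the identification $ht(x)\leftrightarrow(h,x)$, is precisely the operation defined in the statement --- and the hypothesis entered only at the last step, when two such formal expansions of one and the same element were finally set equal. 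Reading those chains backwards therefore turns each of (10), (13), (16), (17) into a genuine axiom.

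I would first check that $(G,\cdot)$ with $G=H\times K$ is a group. This is Schreier's extension theory: the $2$-cocycle identity in (2) is precisely associativity of $\cdot$, the normalisation $f(1,x)=f(x,1)=1$ makes $(1,1)$ a two-sided identity, and $(h,x)^{-1}=(\sigma_{x^{-1}}(h^{-1})f(x^{-1},x)^{-1},x^{-1})$. Here $\sigma$ is a homomorphism $K\to\mathrm{Aut}(H)$ (the Schreier correction term being inner, hence trivial since $H$ is abelian), so $\sigma_1=\mathrm{id}$; also $H\times\{1\}\cong H$ is normal in $G$ with quotient $\cong K$, and the auxiliary $f$-identities of the lemma above remain available. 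For later use I would record that $\Gamma_x\in\mathrm{End}(H)$ together with the abelianness of $H$ gives $\Gamma_x(h)\Gamma_x(h^{-1})=1$, and that (6) lets a factor $h\star_1 k$ be pushed freely through $\sigma_{(x\star_2 y)}(\,\cdot\,)$.

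Next I would verify the five multiplicative Lie algebra axioms one at a time, in each case projecting the claimed identity onto its $K$-component and its $H$-component. The $K$-components are automatic: they reduce to $x\star_2 x=1$, to bilinearity of $\star_2$ in the right and in the left slot, to the Jacobi identity of $\star_2$, and to ${}^z(x\star y)={}^zx\star{}^zy$ (trivial since $K$ is abelian) --- all valid because $\star_2$ is a Lie-ring bracket. The $H$-components carry the content: $x\star x=1$ follows from $h(x,x)=1$ in (3) together with $\sigma_{x\star_2 x}=\sigma_1=\mathrm{id}$ and $\Gamma_x$ additive; expanding the left Leibniz identity $(xy)\star z={}^x(y\star z)(x\star z)$ in coordinates regenerates the displays (8) and (9), so it holds precisely because (10) does; expanding the right Leibniz identity $x\star(yz)=(x\star y)\,{}^y(x\star z)$ regenerates (11) and (12) and holds because of (13); expanding ${}^z(x\star y)={}^zx\star{}^zy$ regenerates (14) and (15) and holds because of (16); and expanding the triple product $((x\star y)\star{}^yz)((y\star z)\star{}^zx)((z\star x)\star{}^xy)$ regenerates the final long display, which equals $1$ because of (17). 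The tools needed to push these expansions through are (6), the cocycle relation (2), the abelianness of $H$, and the $f$-identities of the lemma above.

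I expect essentially all of the labour --- and any real risk of slips --- to be concentrated in the Jacobi axiom and its counterpart (17): fully expanding the triple product into $H\times K$-coordinates and regrouping the $\sigma$'s, $\Gamma$'s, $f$'s and $h$'s until (17) is recognisable is long but mechanical. The one conceptual checkpoint is to make sure the backward reading is legitimate, i.e.\ that (10), (13), (16) and (17) were each obtained by equating two expansions that were themselves computed purely formally, with no prior appeal to the axioms, so that ``(10) holds'' is genuinely \emph{equivalent} to the left Leibniz identity on $G$ rather than merely implied by it, and similarly for the rest. Granting that, assembling the pieces completes the proof.
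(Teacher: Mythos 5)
Your proposal is correct and is essentially the proof the paper intends: the paper states Theorem \ref{theorem} without any written proof, relying implicitly on the fact that the displays (8)--(9), (11)--(12), (14)--(15) and the left side of (17) are formal expansions using only the product (1) and the formula (7) defining $\star$, so that equations (10), (13), (16), (17) (together with (2), (3), (6)) are equivalent to, not merely consequences of, the multiplicative Lie algebra axioms. Your only inaccuracy is the side remark that (4) and (5) were obtained without the multiplicative Lie algebra hypothesis (they do use the biderivation axioms), but since in the converse direction (7) is taken as the definition of $\star$ and only (6) is retained as a hypothesis, this does not affect the validity of your argument.
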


A group $G$ is metacyclic if it contains a cyclic normal subgroup $N$ such that $\frac{G}{N}$
is also cyclic. 

\begin{remark}
\begin{enumerate}
\item Let $G$ be a metacyclic group equipped with a multiplicative Lie algebra structure $\star$. Suppose $H$ is  a cyclic normal subgroup  such that $\frac{G}{H}$ is also cyclic. Then, we have the following extension of multiplicative Lie algebras:
$$ \mathcal{E}:	{1}\longrightarrow  H\overset{i} \longrightarrow G\overset{\beta} \longrightarrow \frac{G}{H} \longrightarrow {1}.$$ 
Let $t$ be a section of $ \mathcal{E}$. Then we have maps $\sigma: \frac{G}{H} \to Aut(H), \ \Gamma^t : \frac{G}{H} \to End(H), \  f^t: \frac{G}{H} \times \frac{G}{H} \to H,$ and $ h^t: \frac{G}{H} \times \frac{G}{H} \to H$ satisfy the equations $(2), (3) $ and the following conditions for all $x, y, z\in \frac{G}{H}:$ 
\begin{enumerate}
	\item $    \Gamma^t_{z}(\sigma_x(k^{-1})  f^t(x, y)^{-1}))  h^t(xy ,z)  =  \sigma_x (  \Gamma^t_z(k^{-1})  h^t(y ,z)) h^t(x ,z )$;\vspace{0.1cm}
	
	\item $ \Gamma^t_x(\sigma_y(l) f^t(y, z))  h^t(x ,yz)  =   \sigma_y( \Gamma^t_x(l) h^t(x ,z )) h^t(x ,y )$;\vspace{0.1cm}
	
	\item $  	\sigma_z (  \Gamma^t_x(k) \Gamma^t_y(h^{-1})h^t(x ,y ))    =   \Gamma^t_x(l\sigma _z(k)f^t(z, y) f^t(y, z)^{-1}  \sigma_y(l^{-1})) \\  \cdot \Gamma^t_y(l^{-1}\sigma _z(h^{-1})\sigma_x(l) f^t(z, x)^{-1} f^t(x, z) ) h^t(x, y)$;\vspace{0.1cm}
	
	\item  $  \Gamma^t_{z}( \Gamma^t_x(k^{-1}) \Gamma^t_y(h) h^t(x ,y )^{-1})      \Gamma^t_{x}( \Gamma^t_y(l^{-1})\Gamma^t_z(k) h^t(y ,z )^{-1}))   \Gamma^t_{y}( \Gamma^t_z(h^{-1}) \Gamma^t_x(l)\\ h^t(z ,x )^{-1}))  = 1.$
	
\end{enumerate}

Since $H$ and $\frac{G}{H}$ have trivial multiplicative Lie algebra structure, $\star$ is  given as follows:  
$$ht(x)\star kt(y) =   \Gamma^t_x(k) \Gamma^t_y(h^{-1}) h^t(x, y), \ \text{for all} \ ht(x), kt(y) \in G.$$ 

\item Let $G$ be a group of order $p^n$ with two generators and nilpotency class two. Then $G$ may be viewed as an extension:
$$ \mathcal{F}:	{1}\longrightarrow  C_{p^a}\overset{i} \longrightarrow G\overset{\beta} \longrightarrow C_{p^b}\times C_{p^c} \longrightarrow {1},$$
where $C_{p^a}\subseteq Z(G), n=a+b+c$ and $a\geq b\geq c\geq 1$ (See \cite{AAR}). Suppose $\star$ is a multiplicative Lie algebra structure on $G$ such that the quotient multiplicative Lie algebra $\frac{G}{C_{p^a}} \cong C_{p^b}\times C_{p^c}$ equipped with trivial multiplicative Lie algebra structure. Then $\mathcal{F}$ is an extension of multiplicative Lie algebras. Let $t$ be a section of $ \mathcal{F}$. Then we have maps $\sigma: C_{p^b}\times C_{p^c}  \to Aut(C_{p^a}), \ \Gamma^t : C_{p^b}\times C_{p^c} \to End(C_{p^a}), \  f^t:  C_{p^b}\times C_{p^c} \times  C_{p^b}\times C_{p^c} \to C_{p^a},$ and $ h^t:  C_{p^b}\times C_{p^c} \times  C_{p^b}\times C_{p^c} \to C_{p^a}$ satisfies the equations $(2), (3) $ and following conditions for all $x, y, z\in  C_{p^b}\times C_{p^c}:$ 
\begin{enumerate}
	\item $    \Gamma^t_{z}(\sigma_x(k^{-1})  f^t(x, y)^{-1}))  h^t(xy ,z)  =  \sigma_x (  \Gamma^t_z(k^{-1})  h^t(y ,z)) h^t(x ,z )$;\vspace{0.1cm}
	
	\item $ \Gamma^t_x(\sigma_y(l) f^t(y, z))  h^t(x ,yz)  =   \sigma_y( \Gamma^t_x(l) h^t(x ,z )) h^t(x ,y )$;\vspace{0.1cm}
	
	\item $  	\sigma_z (  \Gamma^t_x(k) \Gamma^t_y(h^{-1})h^t(x ,y ))    =   \Gamma^t_x(l\sigma _z(k)f^t(z, y) f^t(y, z)^{-1}  \sigma_y(l^{-1})) \\  \cdot \Gamma^t_y(l^{-1}\sigma _z(h^{-1})\sigma_x(l) f^t(z, x)^{-1} f^t(x, z) ) h^t(x, y)$;\vspace{0.1cm}
	
		\item  $  \Gamma^t_{z}( \Gamma^t_x(k^{-1}) \Gamma^t_y(h) h^t(x ,y )^{-1})      \Gamma^t_{x}( \Gamma^t_y(l^{-1})\Gamma^t_z(k) h^t(y ,z )^{-1}))   \Gamma^t_{y}( \Gamma^t_z(h^{-1}) \Gamma^t_x(l)\\ h^t(z ,x )^{-1}))  = 1.$
	
\end{enumerate}

Thus $\star$ is given as follows: 
$$ht(x)\star kt(y) =   \Gamma^t_x(k) \Gamma^t_y(h^{-1}) h^t(x, y)\ \text{for all} \ ht(x), kt(y) \in G. $$ 

\end{enumerate}
\end{remark}

\noindent{\bf Acknowledgement:} We are thankful to Dr. Seema Kushwaha for her constant support. The first named author sincerely thanks IIIT Allahabad and Ministry of Education, Government of India for providing institute fellowship. The second named author sincerely thanks IIIT Allahabad and University grant commission (UGC), Govt. of India, New Delhi for research fellowship. The third named author is thankful to National Board for Higher Mathematics (NBHM), Government of India  for the financial support for the project ``Linear Representation of multiplicative Lie algebra".

\end{document}